
\documentclass[12pt]{article}
\usepackage{amssymb,amsmath,amsthm}
\usepackage[mathscr]{euscript}
\usepackage[left=3cm,right=3cm]{geometry}
\title{An Entourage Approach to the Contraction Principle in Uniform Spaces Endowed with a Graph\\[0.3cm]}
\author{{Aris Aghanians$^1$,\,\,\,Kamal Fallahi$^1$,\,\,\,Kourosh Nourouzi$^{1}$\thanks{Corresponding
author } \thanks {e-mail: nourouzi@kntu.ac.ir; fax: +98 21
22853650}}\\[0.4cm]
{\em $^1$Department of Mathematics, K. N. Toosi University of Technology,}\\
{\em P.O. Box 16315-1618, Tehran, Iran}}
\newtheorem{defn}{Definition}
\newtheorem{cor}{Corollary}
\newtheorem{exm}{Example}
\newtheorem{thm}{Theorem}
\newtheorem{prop}{Proposition}

\newtheorem{lem}{Lemma}

\newcommand{\fix}{{\rm Fix}}
\newcommand{\card}{{\rm card}}
\begin{document}
\maketitle \begin{abstract} In this paper, we study Banach
contractions in uniform spaces endowed with a graph and give some
sufficient conditions for a mapping to be a Picard operator. Our
main results generalize some results of [J. Jachymski, ``The
contraction principle for mappings on a metric space with a graph",
Proc. Amer. Math. Soc. 136 (2008) 1359-1373] employing the basic entourages of the uniform space.
\end{abstract}
\def\thefootnote{ \ }
\footnotetext{{\em}$2010$ Mathematics Subject Classification.
47H10, 05C40.\par {\bf Keywords:} Separated uniform space;
Weakly connected graph; Banach $G$-contraction; Fixed point;
Picard operator.}

\section{Introduction and Preliminaries}
In \cite{ach}, Acharya investigated Banach, Kannan and
Chatterjea type contractions in uniform spaces using the basic
entourages and gave some sufficient conditions for a mapping to
have a unique fixed point. Recently, Jachymski \cite{jac}
established  some fixed point theorems for Banach $G$-contractions in
metric spaces endowed with a graph. His results generalized the
Banach contraction principle in both metric and partially ordered
metric spaces. In \cite{agh}, the authors generalized Theorems 3.1, 3.2, 3.3 and 3.4 as well as Proposition 3.1 of \cite{jac} from metric to uniform spaces endowed with a graph using $\mathscr E$-distances under  weaker contractive conditions.\par
In this paper, combining Acharya's and Jachymski's ideas we aim to generalize the above-mentioned theorems as well as Theorem 3.5 and Proposition 2.1 of \cite{jac} from metric to uniform spaces endowed with a graph using the basic entourages in a completely different manner than one proposed in \cite{agh}.\par
We start by reviewing a few basic notions in uniform
spaces. An in-depth discussion may be found in many standard
texts, e.g., \cite[pp. 238-277]{wil}.\par Let $X$ be an arbitrary
nonempty set. A nonempty collection $\mathscr U$ of subsets of
$X\times X$ (called the entourages of $X$) is called a uniformity
on $X$ if
\begin{description}
\item U1) each member of $\mathscr U$ contains the diagonal
$\Delta(X)=\{(x,x):x\in X\}$; \item U2) if $U,V\in\mathscr U$, then so is $U\cap V$; \item U3) for each entourage $U$ of
$X$, the set $\{(x,y):(y,x)\in U\}$ is again an entourage of $X$;
\item U4) for each entourage $U$ of $X$, there exists another
entourage $V$ of $X$ such that $V\circ V\subseteq U$; \item U5)
the superset of every entourage of $X$ belongs to  $\mathscr U$.
\end{description}
If $\mathscr U$ satisfies (U1)-(U5), then the pair $(X,\mathscr
U)$ (shortly denoted by $X$) is called a uniform space.\par A
uniform space $X$ is separated if the
intersection of all entourages of $X$ coincides with the
diagonal $\Delta(X)$. If this is satisfied, then $X$ is called a
separated uniform space.\par A uniformity $\mathscr U$ induces  a
topology
$$\tau_{\mathscr U}=\big\{A\subseteq X:\forall x\in A\ \ \exists U\in\mathscr
U;\ U[x]\subseteq A\big\},$$ on $X$ where $U[x]=\{y\in X:(x,y)\in
U\}$ for all $U\in\mathscr U$ and all $x\in X$. The topology
$\tau_{\mathscr U}$ is called the uniform topology on $X$ and the
family $\{U[x]:U\in\mathscr U\}$ forms a neighborhood base at
$x\in X$ in the uniform topology.\par A net
$\{x_\lambda:\lambda\in(\Lambda,\geqslant)\}$ in $X$ is said to be
convergent to a point $x\in X$, denoted by $x_\lambda\rightarrow
x$, if it converges to $x$ in the uniform topology $\tau_{\mathscr
U}$, that is, for each entourage $U\in\mathscr U$, there exists a
$\lambda_0\in\Lambda$ such that $(x_\lambda,x)\in U$ for all
$\lambda\geqslant\lambda_0$, and it is said to be Cauchy if for
each entourage $U\in\mathscr U$, there exists a
$\lambda_0\in\Lambda$ such that $(x_\lambda,x_\mu)\in U$ for all
$\lambda,\mu\geqslant\lambda_0$. The uniform space $X$ is called
complete if each Cauchy net in $X$ is convergent to some point of
$X$ and sequentially complete if each Cauchy sequence in $X$ is
convergent to some point of $X$. Also, a mapping $T$ from $X$ into
itself is continuous on $X$ if and only if $x_\lambda\rightarrow
x$ implies $Tx_\lambda\rightarrow Tx$ for all nets
$\{x_\lambda:\lambda\in\Lambda\}$ and all points $x$ in $X$.\par
For any pseudometric $\rho$ on $X$ and any $r>0$, set
$$V(\rho,r)=\big\{(x,y)\in X\times X:\rho(x,y)<r\big\}.$$
Let $\mathscr F$ be a family of (uniformly continuous)
pseudometrics on $X$ that generates the uniformity $\mathscr U$
(see, \cite[Theorem 2.1]{ach}), and denote by $\mathscr V$, the
collection of all sets of the form
$$\bigcap_{i=1}^mV(\rho_i,r_i),$$
where $m$ is a positive integer, $\rho_i\in\mathscr F$ and $r_i>0$
for $i=1,\ldots,m$. Then it is well-known that $\mathscr V$ is a
base for the uniformity $\mathscr U$, i.e., each entourage of $X$
contains at least one member of $\mathscr V$. The elements of
$\mathscr V$ are called the basic entourages of $X$. Now, if
$$V=\bigcap_{i=1}^mV(\rho_i,r_i)\in\mathscr V$$ and $\alpha>0$, then
the set
$$\alpha V =\bigcap_{i=1}^mV(\rho_i,\alpha r_i)$$
is still an element of $\mathscr V$.\par The next lemma, embodies
three important properties of the basic entourages. For more other
properties, the reader may refer to \cite[Lemmas 2.1-2.6]{ach}.

\begin{lem}{\rm\cite{ach}}\label{lem1} Let $X$ be a uniform space
and $V$ be a basic entourage of $X$.
\begin{description}
\item[\rm i)] If $0<\alpha\leq\beta$, then $\alpha V\subseteq\beta
V$. \item[\rm ii)] If
$\alpha,\beta>0$, then $\alpha V\circ\beta
V\subseteq(\alpha+\beta)V$. \item[\rm iii)] For each $x,y\in X$ there exists a positive
number $\lambda$ such that $(x,y)\in\lambda V$. \item[\rm iv)] There exists a pseudometric $\rho$ on $X$ (called the Minkowski's pseudometric of $V$) such that $V=V(\rho,1)$.
\end{description}
\end{lem}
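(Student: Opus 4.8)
The plan is to handle the four assertions one at a time, always using the representation $V=\bigcap_{i=1}^m V(\rho_i,r_i)$ with $\rho_i\in\mathscr F$ and $r_i>0$, together with $\alpha V=\bigcap_{i=1}^m V(\rho_i,\alpha r_i)$. For (i), I would argue coordinatewise: when $0<\alpha\le\beta$ we have $\alpha r_i\le\beta r_i$, so $V(\rho_i,\alpha r_i)\subseteq V(\rho_i,\beta r_i)$ for each $i$, and intersecting over $i=1,\dots,m$ gives $\alpha V\subseteq\beta V$. For (ii), I would take $(x,z)\in\alpha V\circ\beta V$, fix an intermediate point $y$ with $(x,y)\in\beta V$ and $(y,z)\in\alpha V$, and apply the triangle inequality of each pseudometric: $\rho_i(x,z)\le\rho_i(x,y)+\rho_i(y,z)<\beta r_i+\alpha r_i=(\alpha+\beta)r_i$ for every $i$, so $(x,z)\in(\alpha+\beta)V$. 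For (iii), the decisive feature is that the intersection is finite; since each $\rho_i(x,y)$ is a finite real and $r_i>0$, the choice $\lambda:=1+\max_{1\le i\le m}\rho_i(x,y)/r_i$ makes $\rho_i(x,y)<\lambda r_i$ hold simultaneously for all $i$, that is, $(x,y)\in\lambda V$.

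The substantive part is (iv). I would introduce the Minkowski gauge $\rho(x,y):=\inf\{\lambda>0:(x,y)\in\lambda V\}$. This is well defined and finite because the set on the right is nonempty by (iii) and, by (i), upward closed, so its infimum exists and is finite. In checking that $\rho$ is a pseudometric, nonnegativity is immediate, and $\rho(x,x)=0$ because $\rho_i(x,x)=0<\lambda r_i$ for every $\lambda>0$; symmetry follows from the symmetry of each $\rho_i$, which makes every $\lambda V$ a symmetric relation. The triangle inequality is exactly where (ii) is needed: given $\varepsilon>0$ I would choose $\alpha,\beta>0$ with $(x,y)\in\beta V$, $(y,z)\in\alpha V$, $\beta<\rho(x,y)+\varepsilon$ and $\alpha<\rho(y,z)+\varepsilon$; then $(x,z)\in\alpha V\circ\beta V\subseteq(\alpha+\beta)V$ by (ii), so $\rho(x,z)\le\alpha+\beta<\rho(x,y)+\rho(y,z)+2\varepsilon$, and letting $\varepsilon\to0$ yields the claim.

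It remains to identify $V$ with $V(\rho,1)$, and for this I expect the cleanest route to be an explicit evaluation of the gauge. Since $(x,y)\in\lambda V$ is equivalent to $\lambda>\rho_i(x,y)/r_i$ for all $i$, the infimum is given by $\rho(x,y)=\max_{1\le i\le m}\rho_i(x,y)/r_i$; consequently $\rho(x,y)<1$ holds precisely when $\rho_i(x,y)<r_i$ for every $i$, that is, precisely when $(x,y)\in V$, giving $V=V(\rho,1)$. This formula in fact furnishes an alternative, computation-free proof of all of (iv), because $\max_{1\le i\le m}\rho_i/r_i$ is manifestly a pseudometric, being a finite maximum of the pseudometrics $\rho_i/r_i$. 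The only real obstacle I anticipate is the bookkeeping in (iv): confirming that the gauge is finite (which rests on (iii)) and that its value is the advertised maximum, after which the identity $V=V(\rho,1)$ and the pseudometric axioms follow with little effort.
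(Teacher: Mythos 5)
Your proof is correct. Note that the paper itself offers no proof of this lemma --- it is quoted from Acharya's paper \cite{ach} (the relevant statements being his Lemmas 2.1--2.6 and Theorem 2.1) --- so there is no in-text argument to compare yours against. All four parts check out: (i)--(iii) are the routine coordinatewise estimates, and for (iv) your key observation that $(x,y)\in\lambda V$ iff $\lambda>\rho_i(x,y)/r_i$ for every $i$, whence the Minkowski gauge is exactly $\rho=\max_{1\leq i\leq m}\rho_i/r_i$, is the cleanest possible route: it makes the pseudometric axioms immediate (a finite maximum of pseudometrics is a pseudometric) and gives $V=V(\rho,1)$ by direct inspection, so the more laborious gauge-based verification of the triangle inequality via part (ii) is not even needed. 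The only stylistic remark is that you could have led with the explicit formula and dispensed with the infimum definition altogether.
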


Now, we review a few basic notions of graph theory. For more
details, it is referred to \cite{bon}.\par Consider a directed
graph $G$ with $V(G)=X$ and $E(G)\supseteq\Delta(X)$, that is,
$E(G)$ contains all loops, and let $G$ have no parallel edges.
Denoted by $\widetilde G$, the undirected graph obtained from $G$
by ignoring the directions of the edges of $G$, that is,
$$V(\widetilde G)=X,\quad E(\widetilde G)=\Big\{(x,y)\in X\times
X:(x,y)\in E(G)\vee (y,x)\in E(G)\Big\}.$$ \indent If $x,y\in X$,
then by a path in $G$ from $x$ to $y$, it is meant a finite
sequence $(x_i)_{i=0}^N$ consisting of $N+1$ vertices of $G$ such
that $x_0=x$, $x_N=y$, and $(x_{i-1},x_i)$ is an edge of $G$ for
$i=1,\ldots,N$. A graph $G$ is said to be connected if there
exists a path in $G$ between every two vertices of $G$, and weakly
connected if the graph $\widetilde G$ is connected.\par A subgraph
of $G$ is a graph $H$ such that $V(G)$ and $E(G)$ contain $V(H)$
and $E(H)$, respectively, and that $(x,y)\in E(G)$ implies $x,y\in
V(H)$ for all $x,y\in X$.\par If $x\in X$ and the set $E(G)$ is
symmetric, then the subgraph $G_x$ consisting of all edges and
vertices of $G$ that are contained in a path in $G$ starting at
$x$ is called the component of $G$ containing $x$. So
$V(G_x)=[x]_G$, where $[x]_G$ is the equivalence class of $x$ in
the equivalence relation $\sim$ defined as following:
$$y\sim z\ \Longleftrightarrow\ \mbox{there exists a path in
}G\mbox{ from }y\mbox{ to }z\qquad(y,z\in X).$$ Clearly, the graph
$G_x$ is connected for all $x\in X$.

\section{Main Results}
Throughout this section, $X$ is supposed to be a nonempty set
equipped with a separating uniformity $\mathscr U$ and a directed
graph $G$ such that $V(G)=X$ and $E(G)\supseteq\Delta(X)$ unless
otherwise stated. Moreover, we assume that $\mathscr{F}$ is a
nonempty family of (uniformly continuous) pseudometrics on $X$
generating the uniformity $\mathscr{U}$, and $\mathscr{V}$ is the
collection of all sets of the form $\bigcap_{i=1}^m
V(\rho_i,r_i)$, where $m$ is a positive integer,
$\rho_i\in\mathscr{F}$ and $r_i>0$ for $i=1,\ldots,m$. We denote
here the set of all fixed points of a mapping $T:X\rightarrow X$
by $\fix(T)$ and as usual we put $X_T=\{x\in X:(x,Tx)\in
E(G)\}$.\par We begin by the definition of Banach $G$-contractions
using the basic entourages of $X$, whose idea is taken from
\cite[Definition 2.1]{jac}.

\begin{defn}\label{gcontraction}\rm We say that a mapping $T:X\rightarrow X$ is a
Banach $G$-contraction if
\begin{description}
\item[B1)] $T$ preserves the edges of $G$, i.e, $(x,y)\in E(G)$
implies $(Tx,Ty)\in E(G)$ for all $x,y\in X$; \item[B2)] $T$
decreases the weights of the edges of $G$ in the sense that there
exists an $\alpha\in(0,1)$ such that $(x,y)\in V\cap E(G)$ implies
$(Tx,Ty)\in\alpha V$ for all $x,y\in X$ and all $V\in\mathscr V$.
\end{description}
The number $\alpha$ is called the contractive constant of $T$.
\end{defn}

We give a few  examples of Banach $G$-contractions in uniform
spaces endowed with a graph.

\begin{exm}\rm
\begin{enumerate}
\item Since $E(G)$ and each basic entourage of $X$ contains the
diagonal $\Delta(X)$, it follows that each constant mapping
$T:X\rightarrow X$ is a Banach $G$-contraction with any
contractive constant $\alpha\in(0,1)$. \item Let $G_0$ be the
complete graph with $V(G_0)=X$, i.e., $E(G_0)=X\times X$. Then
Banach $G_0$-contractions are precisely the contractive mappings
on $X$, that is, mappings $T:X\rightarrow X$ for which there
exists an $\alpha\in(0,1)$ such that $(x,y)\in V$ implies
$(Tx,Ty)\in\alpha V$ for all $x,y\in X$ and all $V\in\mathscr V$.
The existence of fixed points for these contractions and the
convergence of their sequences of Picard iterations on sequentially
complete uniform spaces were investigated by Acharya \cite[Theorem
3.1]{ach}. \item Let $\preceq$ be a partial order on $X$, and
consider the graphs $G_1$ and $G_2$ with $V(G_1)=V(G_2)=X$,
$$E(G_1)=\big\{(x,y)\in X\times X:x\preceq y\big\},$$
and
$$E(G_2)=\big\{(x,y)\in X\times X:x\preceq y\ \vee\ y\preceq x\big\}.$$
Then $E(G_1)$ and $E(G_2)$ contain all loops. Now, Banach
$G_1$-contractions are precisely the nondecreasing order
contractive mappings on $X$, that is, mappings $T:X\rightarrow X$
for which $x\preceq y$ implies $Tx\preceq Ty$ for all $x,y\in X$,
and $x\preceq y$ and $(x,y)\in V$ imply $(Tx,Ty)\in\alpha V$ for
all $x,y\in X$ and all $V\in\mathscr V$, where $\alpha\in(0,1)$ is
constant. In fact, a mapping preserves the edges of $G_1$ if and
only if it is nondecreasing with respect to $\preceq$.
Furthermore, Banach $G_2$-contractions are precisely the order
contractive mappings on $X$ which map comparable elements onto
comparable elements of $X$.
\end{enumerate}
\end{exm}

Since the basic entourages of $X$ are all symmetric, the next
proposition is an immediate consequence of Definition
\ref{gcontraction}.

\begin{prop}\label{tilde} If a mapping from  $X$ into itself satisfies Condition {\rm(B1)}
(respectively, Condition {\rm(B2)}) for a graph $G$, then it
satisfies Condition {\rm(B1)} (respectively, Condition {\rm(B2)})
for the graphs $G^{-1}$ and $\widetilde G$. In particular, all
Banach $G$-contractions are both Banach $G^{-1}$- and also Banach
$\widetilde G$-contractions.
\end{prop}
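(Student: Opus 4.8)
The plan is to verify each of (B1) and (B2) directly from the corresponding hypothesis for $G$, treating the two target graphs $G^{-1}$ and $\widetilde G$ separately. The single ingredient that makes everything work is the symmetry of the basic entourages: since each member of $\mathscr V$ has the form $V=\bigcap_{i=1}^m V(\rho_i,r_i)$ with every $\rho_i$ a pseudometric, and $\rho_i(x,y)=\rho_i(y,x)$, each $V(\rho_i,r_i)$ is symmetric, hence so is $V$. Moreover $\alpha V=\bigcap_{i=1}^m V(\rho_i,\alpha r_i)$ is again a basic entourage, so it too is symmetric. I would record this symmetry as the opening line of the proof, since it is used repeatedly to swap coordinates.

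For Condition (B1), recall that $(x,y)\in E(G^{-1})$ means $(y,x)\in E(G)$, and $(x,y)\in E(\widetilde G)$ means $(x,y)\in E(G)$ or $(y,x)\in E(G)$. For $G^{-1}$, given $(x,y)\in E(G^{-1})$ I would apply (B1) for $G$ to the pair $(y,x)$ to get $(Ty,Tx)\in E(G)$, i.e., $(Tx,Ty)\in E(G^{-1})$. For $\widetilde G$, I would split into the two cases $(x,y)\in E(G)$ and $(y,x)\in E(G)$: in the first case (B1) for $G$ gives $(Tx,Ty)\in E(G)\subseteq E(\widetilde G)$, and in the second it gives $(Ty,Tx)\in E(G)$, which again places $(Tx,Ty)$ in $E(\widetilde G)$.

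For Condition (B2), fix the contractive constant $\alpha$ furnished by (B2) for $G$, and let $V\in\mathscr V$. For $G^{-1}$, suppose $(x,y)\in V\cap E(G^{-1})$. Then $(y,x)\in E(G)$, and by symmetry of $V$ also $(y,x)\in V$, so $(y,x)\in V\cap E(G)$; (B2) for $G$ yields $(Ty,Tx)\in\alpha V$, and symmetry of $\alpha V$ gives $(Tx,Ty)\in\alpha V$. For $\widetilde G$, suppose $(x,y)\in V\cap E(\widetilde G)$ and split on which of $(x,y),(y,x)$ lies in $E(G)$: in the first case (B2) applies directly, and in the second I repeat the $G^{-1}$ argument verbatim, using symmetry of $V$ and of $\alpha V$. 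The same $\alpha$ works throughout, so (B2) holds for both graphs with the identical contractive constant.

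The ``in particular'' clause is then immediate: a Banach $G$-contraction satisfies both (B1) and (B2) for $G$, hence by the above satisfies both for $G^{-1}$ and for $\widetilde G$, making it a Banach $G^{-1}$- and a Banach $\widetilde G$-contraction. There is no genuine obstacle here; the only point requiring care is to invoke the symmetry of $V$ and of $\alpha V$ at each coordinate swap, which is precisely why the statement is flagged as an immediate consequence of Definition \ref{gcontraction}.
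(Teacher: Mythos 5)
Your proof is correct and follows exactly the route the paper intends: the paper gives no written proof, merely noting that the result is an immediate consequence of Definition \ref{gcontraction} because the basic entourages are symmetric, and your write-up simply fills in that case analysis (symmetry of $V$ and $\alpha V$ from the pseudometrics, then coordinate swaps for $G^{-1}$ and $\widetilde G$). Nothing is missing and no different idea is introduced.
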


In addition to Lemma \ref{lem1}, we need the following lemma to
prove our results:

\begin{lem}\label{jaclemma}
Let $T:X\rightarrow X$ be a Banach $G$-contraction with
contractive constant $\alpha$. Then for each $x\in X$, each
$y\in[x]_{\widetilde G}$, and each $V\in\mathscr V$, there exists
an $r(x,y;V)>0$ such that
$$(T^nx,T^ny)\in\alpha^nr(x,y;V)V,$$
for all $n\geq1$.
\end{lem}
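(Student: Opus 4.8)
The plan is to reduce everything to a single-edge estimate and then glue along a path. By Proposition \ref{tilde}, the mapping $T$ is also a Banach $\widetilde G$-contraction with the same contractive constant $\alpha$, so I may freely use Conditions (B1) and (B2) for $\widetilde G$. Since $y\in[x]_{\widetilde G}$, there is a path $(x_i)_{i=0}^N$ in $\widetilde G$ with $x_0=x$ and $x_N=y$; hence it suffices to control each consecutive pair $(x_{i-1},x_i)\in E(\widetilde G)$ separately and then combine the resulting estimates.

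First I would treat the single-edge case: suppose $(u,v)\in E(\widetilde G)$. By Lemma \ref{lem1}(iii) there is a $\lambda>0$ with $(u,v)\in\lambda V$, and since $\lambda V\in\mathscr V$ we have $(u,v)\in(\lambda V)\cap E(\widetilde G)$. I would then prove by induction on $n$ that $(T^nu,T^nv)\in\alpha^n\lambda V$. For the base step, Condition (B2) for $\widetilde G$ applied to the basic entourage $\lambda V$ gives $(Tu,Tv)\in\alpha\lambda V$. For the inductive step, Condition (B1) for $\widetilde G$ guarantees $(T^nu,T^nv)\in E(\widetilde G)$, so that $(T^nu,T^nv)\in(\alpha^n\lambda V)\cap E(\widetilde G)$; applying (B2) to the basic entourage $\alpha^n\lambda V$ then yields $(T^{n+1}u,T^{n+1}v)\in\alpha(\alpha^n\lambda V)=\alpha^{n+1}\lambda V$. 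This furnishes a constant $r(u,v;V):=\lambda>0$ valid for every edge.

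Next I would glue along the path. Writing $r_i:=r(x_{i-1},x_i;V)$ for $i=1,\dots,N$, the single-edge case gives $(T^nx_{i-1},T^nx_i)\in\alpha^nr_iV$ for all $n\geq1$. Composing these $N$ relations and repeatedly invoking the triangle-type inclusion of Lemma \ref{lem1}(ii), namely $\alpha^nr_iV\circ\alpha^nr_jV\subseteq\alpha^n(r_i+r_j)V$, I obtain
$$
(T^nx,T^ny)=(T^nx_0,T^nx_N)\in\alpha^n\Big(\sum_{i=1}^Nr_i\Big)V .
$$
If $x\neq y$ then $N\geq1$ and one sets $r(x,y;V):=\sum_{i=1}^Nr_i>0$; if $x=y$ the two orbits coincide and any positive value of $r(x,y;V)$ works, since the diagonal $\Delta(X)$ is contained in every entourage by (U1). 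This completes the argument.

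I expect the main obstacle to be the inductive step of the single-edge case, where reapplying (B2) requires knowing \emph{in advance} that the pair $(T^nu,T^nv)$ still lies in $E(\widetilde G)$; this is precisely the role of passing to $\widetilde G$ and invoking its edge-preservation property (B1), without which the contractive estimate could not be propagated along the orbit. The remaining ingredients—the initial choice of $\lambda$ from Lemma \ref{lem1}(iii) and the summation of the constants via Lemma \ref{lem1}(ii)—are routine once the scaled sets $\alpha^nr V$ are recognized as members of $\mathscr V$.
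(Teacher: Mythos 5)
Your proposal is correct and follows essentially the same route as the paper's proof: pass to $\widetilde G$ via Proposition \ref{tilde}, obtain the single-edge estimate $(T^nx_{i-1},T^nx_i)\in\alpha^n\lambda_iV$ by induction using (B1) and (B2) together with Lemma \ref{lem1}(iii), and compose along the path via Lemma \ref{lem1}(ii) to get $r(x,y;V)=\sum_i\lambda_i$. Your explicit treatment of the degenerate case $x=y$ (where the sum could be empty) is a small additional care the paper omits, but the argument is otherwise identical.
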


\begin{proof} Let $x\in X$, $y\in[x]_{\widetilde G}$, and $V\in\mathscr V$ be given. Then
there is a path $(x_i)_{i=0}^N$ in $\widetilde G$ from $x$ to $y$,
i.e., $x_0=x$, $x_N=y$, and $(x_{i-1},x_i)\in E(\widetilde G)$ for
$i=1,\ldots,N$. Since $T$ is a Banach $G$-contraction, it follows
by Proposition \ref{tilde} that $T$ is a Banach $\widetilde
G$-contraction and so it preserves the edges of $\widetilde G$.
Hence an easy induction yields
$$(T^nx_{i-1},T^nx_i)\in E(\widetilde G)\qquad(i=1,\ldots,N,\ n\geq1).$$
Now, if $n\geq1$ is arbitrary, then for each $i=1,\ldots,N$, there
exists a positive number $\lambda_i$ such that
$(x_{i-1},x_i)\in\lambda_iV$, and using induction and the Banach
$\widetilde G$-contractivity of $T$ it follows that
$(T^nx_{i-1},T^nx_i)\in\alpha^n(\lambda_iV)$. Hence, in view of
Lemma \ref{lem1}, we have
$$(T^nx,T^ny)=(T^nx_0,T^nx_N)\in(\alpha^n\lambda_1V)\circ\cdots\circ(\alpha^n\lambda_NV)
\subseteq\Big(\alpha^n\sum_{i=1}^N\lambda_i\Big)V.$$ So it
suffices to put
$$r(x,y;V)=\sum_{i=1}^N\lambda_i=\lambda_1+\cdots+\lambda_N>0.$$
\end{proof}

Similar to Jachymski's idea \cite{jac}, we define Cauchy
equivalent sequences in uniform spaces.

\begin{defn}\rm We say that two sequences $\{x_n\}$ and $\{y_n\}$ in
$X$ are Cauchy equivalent whenever
\begin{description}
\item[C1)] $\{x_n\}$ and $\{y_n\}$ are both Cauchy sequences in
$X$; \item[C2)] for each entourage $U$ in $\mathscr U$, there
exists an $N>0$ such that $(x_n,y_n)\in U$ for all $n\geq N$.
\end{description}
\end{defn}

Now, we are ready to prove our first main theorem.

\begin{thm}\label{main1} The following statements are equivalent:
\begin{description}
\item[\rm i)] $G$ is weakly connected; \item[\rm ii)] For each
Banach $G$-contraction $T:X\rightarrow X$ and each $x,y\in X$, the
sequences $\{T^nx\}$ and $\{T^ny\}$ are Cauchy equivalent;
\item[\rm iii)] Each Banach $G$-contraction has at most one fixed
point in $X$.
\end{description}
\end{thm}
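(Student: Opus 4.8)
The plan is to prove the cyclic chain of implications $(i)\Rightarrow(ii)\Rightarrow(iii)\Rightarrow(i)$.

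\medskip

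For $(i)\Rightarrow(ii)$, I would fix a Banach $G$-contraction $T$ with contractive constant $\alpha\in(0,1)$ and two points $x,y\in X$. Weak connectedness means $[x]_{\widetilde G}=X$, so in particular $y\in[x]_{\widetilde G}$, and Lemma~\ref{jaclemma} applies directly: for each basic entourage $V\in\mathscr V$ there is an $r(x,y;V)>0$ with $(T^nx,T^ny)\in\alpha^nr(x,y;V)V$ for all $n\geq1$. Since $\alpha^nr(x,y;V)\to0$, for any prescribed entourage $U$ I first shrink to a basic entourage $V\subseteq U$, then use Lemma~\ref{lem1}(i) to absorb the scalar once $\alpha^nr(x,y;V)\leq1$; this yields Condition (C2). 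To get (C1) I would show each $\{T^nx\}$ is Cauchy: applying Lemma~\ref{jaclemma} to the pair $(x,Tx)$ (valid because $Tx\in X=[x]_{\widetilde G}$ by weak connectedness) controls $(T^nx,T^{n+1}x)$, and then telescoping via Lemma~\ref{lem1}(ii) bounds $(T^nx,T^mx)$ for $m>n$ by a tail of a geometric-type series, which is eventually contained in any given basic entourage. The separatedness of $X$ is not needed here; only the entourage arithmetic of Lemma~\ref{lem1} is.

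\medskip

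For $(ii)\Rightarrow(iii)$, suppose $T$ is a Banach $G$-contraction with two fixed points $x,y\in\fix(T)$. Then $T^nx=x$ and $T^ny=y$ for all $n$, so the constant sequences $\{x\}$ and $\{y\}$ are Cauchy equivalent by hypothesis. Condition (C2) then forces $(x,y)\in U$ for every entourage $U\in\mathscr U$, whence $(x,y)$ lies in the intersection of all entourages. Because $\mathscr U$ is separating, this intersection is the diagonal $\Delta(X)$, so $x=y$; thus $T$ has at most one fixed point. This is the one place where separatedness of the uniformity is essential.

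\medskip

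The implication $(iii)\Rightarrow(i)$ I would prove by contraposition, which I expect to be the main obstacle since it requires manufacturing a counterexample rather than running the entourage estimates. Assuming $G$ is not weakly connected, $\widetilde G$ is disconnected, so there exist at least two distinct equivalence classes under $\sim$; pick points $a,b$ in two different components and define $T$ to be the constant map $Tz=a$ on the component $[a]_{\widetilde G}$ and $Tz=b$ elsewhere. The first example after Definition~\ref{gcontraction} shows constant maps satisfy (B1) and (B2) trivially, and since $T$ maps each component into itself and restricts to a constant on each, one checks (B1) and (B2) hold on all of $X$ (edges never cross components, as $(u,v)\in E(\widetilde G)$ implies $u\sim v$, so the only edges to test are within a single component where $T$ is constant). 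This $T$ is a genuine Banach $G$-contraction with two distinct fixed points $a$ and $b$, contradicting $(iii)$. The delicate points are verifying that $T$ preserves edges globally and that no basic-entourage scaling is violated across components; these follow because all relevant pairs $(x,y)\in E(G)$ satisfy $x\sim y$ and are therefore handled by the constant behaviour on their common component.
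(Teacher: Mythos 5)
Your proposal is correct and follows essentially the same route as the paper: the same cyclic chain of implications, the same use of Lemma~\ref{jaclemma} applied to $(x,Tx)$ for the Cauchy property and to $(x,y)$ for Condition (C2), and the identical two-valued constant map on $[a]_{\widetilde G}$ versus its complement for (iii) $\Rightarrow$ (i). The only cosmetic difference is that you telescope with Lemma~\ref{lem1}(ii) where the paper passes to the Minkowski pseudometric of $V$ and sums a geometric series, which amounts to the same estimate.
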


\begin{proof}
(i $\Rightarrow$ ii) Let $x,y\in X$ and $T:X\rightarrow X$ be a
Banach $G$-contraction with contractive constant $\alpha$. To see
that the sequence $\{T^nx\}$ is Cauchy in $X$, first note that
since $G$ is weakly connected, we have $[x]_{\widetilde G}=X$ and
so $Tx\in[x]_{\widetilde G}$. Pick a basic entourage $V$ of $X$
and denote by $\rho$ the Minkowski's pseudometric of $V$. Then by Lemma
\ref{jaclemma}, for each positive integer $n$ we have
$(T^nx,T^{n+1}x)\in\alpha^nr(x,Tx;V)V$, and hence
$$\rho(T^nx,T^{n+1}x)<\alpha^nr(x,Tx;V)\qquad(n\geq1).$$
Therefore,
$$\sum_{n=1}^\infty\rho(T^nx,T^{n+1}x)\leq\sum_{n=1}^\infty\alpha^nr(x,Tx;V)
=\frac{\alpha r(x,Tx;V)}{1-\alpha}<\infty.$$ Now, an easy argument
shows that $\rho(T^nx,T^mx)\rightarrow 0$ as
$n,m\rightarrow\infty$. So there exists an integer $N$ such that
$\rho(T^nx,T^mx)<1$, that is, $(T^nx,T^mx)\in V$ for all $n,m\geq
N$. Because $V\in\mathscr V$ was arbitrary, the sequence
$\{T^nx\}$ is Cauchy in $X$. Similarly, one can show that
$\{T^ny\}$ is Cauchy in $X$. To establish Condition (C2), we
observe first that by the weak connectivity of $G$, we have
$y\in[x]_{\widetilde G}$. So, by Lemma \ref{jaclemma}, given any
$V\in\mathscr V$, there exists an $r(x,y;V)>0$ such that
$(T^nx,T^ny)\in\alpha^nr(x,y;V)V$ for all $n\geq1$. Choose $N>0$
sufficiently large so that $\alpha^nr(x,y;V)<1$ for all $n\geq N$.
Hence by Lemma \ref{lem1}, we have
$$(T^nx,T^ny)\in\alpha^nr(x,y;V)V\subseteq V\qquad(n\geq N).$$
Therefore, Condition (C2) is satisfied and the sequences
$\{T^nx\}$ and $\{T^ny\}$ are Cauchy equivalent.\par (ii
$\Rightarrow$ iii) If $x$ and $y$ are two fixed points for a
Banach $G$-contraction $T:X\rightarrow X$, then the sequences
$\{T^nx\}$ and $\{T^ny\}$ are Cauchy equivalent. So for any basic
entourage $V$ of $X$ and sufficiently large $n$ we have
$$(x,y)=(T^nx,T^ny)\in V.$$
Because $V$ was arbitrary and $X$ is separated, we get $x=y$.\par
(iii $\Rightarrow$ i) Suppose on the contrary that $G$ is not
weakly connected. So there exists an $x_0\in X$ such that both
sets $[x_0]_{\widetilde G}$ and $X\setminus[x_0]_{\widetilde G}$
are nonempty. Fix any $y_0\in X\setminus[x_0]_{\widetilde G}$ and
define a mapping $T:X\rightarrow X$ by
$$Tx=\left\{\begin{array}{cc}
x_0&x\in[x_0]_{\widetilde G}\\\\
y_0&x\in X\setminus[x_0]_{\widetilde G}
\end{array}\right..$$
Clearly, $y_0\neq x_0$ and $\fix(T)=\{x_0,y_0\}$. We claim that
$T$ is a Banach $G$-contraction. For, let $(x,y)\in E(G)\subseteq
E(\widetilde G)$. Then $[x]_{\widetilde G}=[y]_{\widetilde G}$;
therefore, both $x$ and $y$ belong to either $[x_0]_{\widetilde
G}$ or $X\setminus[x_0]_{\widetilde G}$, and so $Tx=Ty$. Because
$E(G)$ contains all loops, $T$ preserves the edges of $G$. To
establish Condition (B2), let $(x,y)\in V\cap E(G)$, where $V$ is
an arbitrary element of $\mathscr V$. Then $Tx=Ty$, and so
$$(Tx,Ty)\in\Delta(X)\subseteq\alpha V,$$
for any constant $\alpha\in(0,1)$. Consequently, $T$ is a Banach
$G$-contraction, which is a contradiction. Hence the graph $G$ is
weakly connected.
\end{proof}

By a careful look at the proof of Theorem \ref{main1}, it is
understood that the hypothesis that $X$ is separating is only used
in the proof of (ii $\Rightarrow$ iii). Therefore, the other parts
are still true in arbitrary (not necessarily separated) uniform
spaces. For instance, if each Banach $G$-contraction has at most
one fixed point in an arbitrary uniform space $X$, then the graph
$G$ is weakly connected.\par The next result is a consequence of
Theorem \ref{main1}.

\begin{cor}\label{corollary} If $X$ is sequentially complete, then the following
statements are equivalent:
\begin{description}
\item[\rm i)] $G$ is weakly connected; \item[\rm ii)] For each
Banach $G$-contraction $T:X\rightarrow X$, there exists an $x^*\in
X$ such that $T^nx\rightarrow x^*$ for all $x\in X$.
\end{description}
\end{cor}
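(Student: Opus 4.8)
The plan is to derive this corollary directly from Theorem \ref{main1}, whose equivalences were established without any completeness assumption; sequential completeness is the one new ingredient, and it is used only to upgrade ``Cauchy'' to ``convergent'' in the forward direction.

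For (i $\Rightarrow$ ii), I would assume $G$ is weakly connected and invoke the implication (i $\Rightarrow$ ii) of Theorem \ref{main1}: for every Banach $G$-contraction $T$ and all $x,y\in X$, the Picard sequences $\{T^nx\}$ and $\{T^ny\}$ are Cauchy equivalent. In particular each $\{T^nx\}$ is a Cauchy sequence, so by sequential completeness it converges to some point of $X$. Fixing a base point $x_0\in X$, I would set $x^*=\lim_n T^nx_0$ and then show that \emph{every} Picard sequence shares this limit. Given an arbitrary $x\in X$ and an entourage $U$, I would use property (U4) to choose $V\in\mathscr U$ with $V\circ V\subseteq U$; then for all sufficiently large $n$, condition (C2) of Cauchy equivalence gives $(T^nx,T^nx_0)\in V$ while convergence of the base sequence gives $(T^nx_0,x^*)\in V$, whence $(T^nx,x^*)\in V\circ V\subseteq U$. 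Since $U$ was arbitrary, $T^nx\rightarrow x^*$, and the same $x^*$ serves for all $x\in X$.

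For (ii $\Rightarrow$ i), rather than argue about connectivity directly I would verify statement (iii) of Theorem \ref{main1} and then apply its implication (iii $\Rightarrow$ i). So suppose $T$ is a Banach $G$-contraction with $x,y\in\fix(T)$. By (ii) there is an $x^*$ with $T^nz\rightarrow x^*$ for all $z\in X$; taking $z=x$ and $z=y$ turns $\{T^nx\}$ and $\{T^ny\}$ into the constant sequences at $x$ and $y$, which also converge to $x$ and $y$ respectively. Because $X$ is separated and limits are therefore unique, I conclude $x=x^*=y$, so $T$ has at most one fixed point; Theorem \ref{main1} then forces $G$ to be weakly connected.

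The step carrying the real content is the common-limit argument in (i $\Rightarrow$ ii): sequential completeness by itself only produces \emph{some} limit for each Picard sequence, and it is the interplay between Cauchy equivalence (C2) and the uniform ``triangle'' property (U4) that pins all these limits to a single $x^*$. The reverse implication is essentially bookkeeping resting on the constant-sequence observation and separatedness.
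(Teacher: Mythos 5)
Your proposal is correct and follows essentially the same route as the paper: the forward direction combines the Cauchy equivalence from Theorem \ref{main1} with sequential completeness and the composition property $V\circ V\subseteq U$ to pin all Picard sequences to one limit, and the reverse direction uses the constant-sequence observation plus separatedness to verify uniqueness of fixed points and then invokes Theorem \ref{main1}. The only cosmetic difference is that you work with general entourages via (U4) where the paper picks a basic entourage $V_0\in\mathscr V$, which changes nothing.
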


\begin{proof} (i $\Rightarrow$ ii) Let $T:X\rightarrow X$ be a
Banach $G$-contraction and let $x\in X$. Since $G$ is weakly
connected, it follows by Theorem \ref{main1} that the sequence
$\{T^nx\}$ is Cauchy in $X$, and since $X$ is sequentially
complete, there exists an $x^*\in X$ such that $T^nx\rightarrow
x^*$. Now, let $y\in X$ be given. Then for any basic entourage
$V$, there exists a $V_0\in\mathscr V$ such that $V_0\circ
V_0\subseteq V$. By Cauchy equivalence of $\{T^ny\}$ and
$\{T^nx\}$, we may pick an integer $N>0$ such that
$$(T^ny,T^nx),(T^nx,x^*)\in V_0\qquad(n\geq N).$$
Therefore,
$$(T^ny,x^*)\in V_0\circ V_0\subseteq V$$
for all $n\geq N$. So $T^ny\rightarrow x^*$.\par (ii $\Rightarrow$
i) Let $T:X\rightarrow X$ be a Banach $G$-contraction. Then the
only possible fixed point of $T$ is $x^*$. Indeed, if $y^*\in X$
is a fixed point for $T$, then $y^*=T^ny^*\rightarrow x^*$. Since
$X$ is separated, it follows that $y^*=x^*$. Hence $T$ has at most
one fixed point in $X$. So by Theorem \ref{main1}, the graph $G$
is weakly connected.
\end{proof}

\begin{prop}\label{proposition} Let $T:X\rightarrow X$ be a Banach $G$-contraction
and $x_0\in X$. If $Tx_0\in[x_0]_{\widetilde G}$, then
$[x_0]_{\widetilde G}$ is $T$-invariant and
$T\mid_{[x_0]_{\widetilde G}}$ is a Banach ${\widetilde
G}_{x_0}$-contraction. Furthermore, the sequences $\{T^nx\}$ and
$\{T^ny\}$ are Cauchy equivalent for all $x,y\in[x_0]_{\widetilde
G}$.
\end{prop}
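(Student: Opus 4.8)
The plan is to prove three assertions in sequence: that $[x_0]_{\widetilde G}$ is $T$-invariant, that the restriction is a Banach $\widetilde G_{x_0}$-contraction, and finally the Cauchy-equivalence claim. For the invariance, I would take an arbitrary $x\in[x_0]_{\widetilde G}$ and exhibit a path in $\widetilde G$ from $x_0$ to $Tx$, thereby placing $Tx$ in the same component. Since $x\sim x_0$, there is a path in $\widetilde G$ from $x_0$ to $x$; applying $T$ to this path and using that $T$ is a Banach $\widetilde G$-contraction (via Proposition \ref{tilde}), hence preserves edges of $\widetilde G$, yields a path in $\widetilde G$ from $Tx_0$ to $Tx$. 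Concatenating the given path from $x_0$ to $Tx_0$ (which exists precisely because $Tx_0\in[x_0]_{\widetilde G}$) with this image path gives a path from $x_0$ to $Tx$, so $Tx\in[x_0]_{\widetilde G}$ and invariance follows. This shows $T\mid_{[x_0]_{\widetilde G}}$ is a well-defined self-map of $[x_0]_{\widetilde G}$.

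Next I would verify that this restriction satisfies (B1) and (B2) relative to the component graph $\widetilde G_{x_0}$. The key observation is that $\widetilde G_{x_0}$ is the component of the symmetric graph $\widetilde G$ containing $x_0$, so its vertex set is exactly $[x_0]_{\widetilde G}$ and its edges are those edges of $\widetilde G$ both of whose endpoints lie in $[x_0]_{\widetilde G}$. For (B1): if $(x,y)\in E(\widetilde G_{x_0})$, then $(x,y)\in E(\widetilde G)$ with $x,y\in[x_0]_{\widetilde G}$; since $T$ preserves edges of $\widetilde G$ we get $(Tx,Ty)\in E(\widetilde G)$, and by the invariance just proved both $Tx,Ty\in[x_0]_{\widetilde G}$, so $(Tx,Ty)\in E(\widetilde G_{x_0})$. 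Condition (B2) is inherited directly from the Banach $\widetilde G$-contractivity of $T$ with the same contractive constant $\alpha$, again using invariance to confirm the image pair lands in the component.

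Finally, for the Cauchy-equivalence claim, I would observe that for $x,y\in[x_0]_{\widetilde G}$ the restriction $S=T\mid_{[x_0]_{\widetilde G}}$ is a Banach $\widetilde G_{x_0}$-contraction on the set $[x_0]_{\widetilde G}$, and the graph $\widetilde G_{x_0}$ is connected (as noted at the end of the Preliminaries, the component graph is always connected) and symmetric, hence weakly connected. Applying the implication (i $\Rightarrow$ ii) of Theorem \ref{main1} to $S$ on the uniform subspace $[x_0]_{\widetilde G}$ gives that $\{S^nx\}$ and $\{S^ny\}$ are Cauchy equivalent; since $S^nx=T^nx$ and $S^ny=T^ny$ by invariance, the conclusion follows. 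The one point requiring care, and the step I expect to be the main obstacle, is making the transition to the subspace rigorous: the inherited uniformity on $[x_0]_{\widetilde G}$ must be checked to have $\{V\cap([x_0]_{\widetilde G}\times[x_0]_{\widetilde G}):V\in\mathscr V\}$ as a base of basic entourages so that Theorem \ref{main1} genuinely applies, and one must confirm that Cauchy equivalence relative to the subspace uniformity matches the ambient notion (which it does, since the ambient $\mathscr U$ has a base given by supersets of these traces). Once this subspace bookkeeping is settled, the three parts assemble immediately.
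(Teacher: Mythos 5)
Your proposal is correct and follows essentially the same route as the paper: prove $T$-invariance of $[x_0]_{\widetilde G}$ by pushing a path forward with the edge-preservation of $T$ on $\widetilde G$, check that the restriction is a Banach $\widetilde G_{x_0}$-contraction (your induced-subgraph characterization of $E(\widetilde G_{x_0})$ is equivalent to the paper's explicit path construction), and then invoke Theorem \ref{main1} on the connected component. Your closing remark about verifying that the trace entourages form a base for the subspace uniformity is a point the paper leaves implicit, but it checks out and does not change the argument.
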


\begin{proof} Note  because $Tx_0\in[x_0]_{\widetilde G}$, we
have $[Tx_0]_{\widetilde G}=[x_0]_{\widetilde G}$. Suppose that
$x\in[x_0]_{\widetilde G}$. Then there exists a path
$(x_i)_{i=0}^N$ in $\widetilde G$ from $x_0$ to $x$, i.e., $x_N=x$
and $(x_{i-1},x_i)\in E(\widetilde G)$ for $i=1,\ldots,N$. Since
$T$ is a Banach $G$-contraction, it follows by Proposition
\ref{tilde} that $T$ is a Banach $\widetilde G$-contraction and so
it preserves the edges of $\widetilde G$. Hence $(Tx_i)_{i=0}^N$
is a path in $\widetilde G$ from $Tx_0$ to $Tx$. Therefore,
$Tx\in[Tx_0]_{\widetilde G}=[x_0]_{\widetilde G}$ and whence
$[x_0]_{\widetilde G}$ is $T$-invariant.\par Because $T$ is itself
a Banach $\widetilde G$-contraction and $E({\widetilde
G}_{x_0})\subseteq E(\widetilde G)$, to see that it is a Banach
${\widetilde G}_{x_0}$-contraction on $[x_0]_{\widetilde G}$, it
suffices to show that $T$ preserves the edges of ${\widetilde
G}_{x_0}$. To this end, suppose that $(x,y)$ is any edge of
${\widetilde G}_{x_0}$. Then $x,y\in V({\widetilde
G}_{x_0})=[x_0]_{\widetilde G}$ and hence there is a path
$(x_i)_{i=0}^N$ in $\widetilde G$ from $x_0$ to $x$, i.e.,
$x_N=x$, and $(x_{i-1},x_i)\in E(\widetilde G)$ for
$i=1,\ldots,N$. Therefore, setting $x_{N+1}=y$, we see that
$(x_i)_{i=0}^{N+1}$ is a path in $\widetilde G$ from $x_0$ to $y$.
On the other hand, $Tx_0\in[x_0]_{\widetilde G}$. So there is
another path $(y_j)_{j=0}^M$ in $\widetilde G$ from $x_0$ to
$Tx_0$, i.e., $y_0=x_0$, $y_M=Tx_0$ and $(y_{j-1},y_j)\in
E(\widetilde G)$ for $j=1,\ldots,M$. Using the Banach $\widetilde
G$-contractivity of $T$, we see that
$$(x_0=y_0,y_1,\ldots,y_M=Tx_0,Tx_1,\ldots,Tx_N=Tx,Tx_{N+1}=Ty)$$
is a path in $\widetilde G$ from $x_0$ to $Ty$. In particular,
$(Tx,Ty)=(Tx_N,Tx_{N+1})\in E(\widetilde G)$. Moreover,
$Tx,Ty\in[x_0]_{\widetilde G}=V({\widetilde G}_{x_0})$. Thus,
$(Tx,Ty)\in E({\widetilde G}_{x_0})$.\par Finally, since the graph
${\widetilde G}_{x_0}$ is weakly connected, $V({\widetilde
G}_{x_0})=[x_0]_{\widetilde G}$, and $T:[x_0]_{\widetilde
G}\rightarrow[x_0]_{\widetilde G}$ is a Banach ${\widetilde
G}_{x_0}$-contraction, it follows by Theorem \ref{main1} that the
sequences $\{T^nx\}$ and $\{T^ny\}$ are Cauchy equivalent for all
$x,y\in[x_0]_{\widetilde G}$.
\end{proof}

Following the ideas of Petru\c sel and Rus \cite{pet}, and also
Jachymski \cite{jac}, we define Picard and weakly Picard operators
in uniform spaces.

\begin{defn}\rm Let $T$ be a mapping from $X$ into itself. We say
that
\begin{description}
\item[\rm i)] $T$ is a Picard operator if $T$ has a unique fixed
point $x^*$ in $X$ and $T^nx\rightarrow x^*$ as
$n\rightarrow\infty$ for all $x\in X$. \item[\rm ii)] $T$ is a
weakly Picard operator if the sequence $\{T^nx\}$ converges to a
fixed point of $T$ for all $x\in X$.
\end{description}
\end{defn}

It is clear that each Picard operator is a weakly Picard operator
but the converse is not generally true. In fact, the mapping $T$
defined in the proof of (iii $\Rightarrow$ i) in Theorem
\ref{main1} is a weakly Picard operator which fails to be a Picard
operator.\par Now, we are ready to prove our second main result.

\begin{thm}\label{main2} Let $X$ be sequentially complete
and have the following property:
\begin{description}
\item[$(\ast)$] If a sequence $\{x_n\}$ converges to some $x$ in
$X$ and it satisfies $(x_n,x_{n+1})\in E(G)$ for all $n\geq1$,
then there exists a subsequence $\{x_{n_k}\}$ of $\{x_n\}$ such
that $(x_{n_k},x)\in E(G)$ for all $k\geq1$.
\end{description}
Suppose that $T:X\rightarrow X$ is a Banach $G$-contraction. Then
the following assertions hold:
\begin{enumerate}
\item $T\mid_{[x]_{\widetilde G}}$ is a Picard operator for all
$x\in X_T$. \item If $X_T$ is nonempty and $G$ is weakly
connected, then $T$ is a Picard operator. \item
$\card(\fix(T))=\card\{[x]_{\widetilde G}:x\in X_T\}$. \item $T$
has a fixed point in $X$ if and only if $X_T$ is nonempty. \item
$T$ has a unique fixed point in $X$ if and only if there exists an
$x\in X_T$ such that $X_T\subseteq[x]_{\widetilde G}$. \item
$T\mid_{X'}$ is a weakly Picard operator, where
$X'=\bigcup\{[x]_{\widetilde G}:x\in X_T\}$. \item If $X_T=X$,
then $T$ is a weakly Picard operator.
\end{enumerate}
\end{thm}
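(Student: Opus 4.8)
The plan is to prove assertion (1) first and in full, since it carries the only genuinely new argument; the remaining six assertions then follow by bookkeeping from (1) together with Proposition \ref{proposition}, Theorem \ref{main1}, and the elementary observation that $\fix(T)\subseteq X_T$ (every loop lies in $E(G)$, so a fixed point $z$ satisfies $(z,Tz)=(z,z)\in E(G)$).

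For assertion (1), fix $x\in X_T$. Since $(x,Tx)\in E(G)\subseteq E(\widetilde G)$ we have $Tx\in[x]_{\widetilde G}$, so Proposition \ref{proposition} applies: $[x]_{\widetilde G}$ is $T$-invariant, $T\mid_{[x]_{\widetilde G}}$ is a Banach $\widetilde G_x$-contraction, and all Picard sequences in this component are Cauchy equivalent. In particular $\{T^nx\}$ is Cauchy, and by sequential completeness of $X$ it converges to some $x^*\in X$. The crux is to show that $x^*$ is a fixed point \emph{without} assuming $T$ continuous, and this is exactly where property $(\ast)$ enters: because $T$ preserves the edges of $G$, an induction gives $(T^nx,T^{n+1}x)\in E(G)$ for all $n$, so $(\ast)$ furnishes a subsequence with $(T^{n_k}x,x^*)\in E(G)$. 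I expect this fixed-point step to be the main obstacle, as it is the single place where property $(\ast)$ and the entourage arithmetic of Lemma \ref{lem1} must be combined carefully.

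The key estimate bounds $(Tx^*,x^*)$ inside an arbitrary basic entourage $V$. By Proposition \ref{tilde}, $T$ is also a Banach $\widetilde G$-contraction, so from $(x^*,T^{n_k}x)\in E(\widetilde G)$ together with $T^{n_k}x\to x^*$ (which places $(x^*,T^{n_k}x)$ in $\tfrac{1}{2\alpha}V$ for large $k$), condition (B2) yields $(Tx^*,T^{n_k+1}x)\in\tfrac12 V$. Since $\{T^{n_k+1}x\}$ is a subsequence of a convergent sequence, $(T^{n_k+1}x,x^*)\in\tfrac12 V$ for large $k$ as well, and the composition rule of Lemma \ref{lem1}(ii) gives $(Tx^*,x^*)\in\tfrac12 V\circ\tfrac12 V\subseteq V$. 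As $V\in\mathscr V$ was arbitrary and $X$ is separated, $Tx^*=x^*$. Moreover $x^*\in[x]_{\widetilde G}$, since $(T^{n_k}x,x^*)\in E(\widetilde G)$ with $T^{n_k}x\in[x]_{\widetilde G}$; Theorem \ref{main1} applied to the weakly connected graph $\widetilde G_x$ forces uniqueness of the fixed point in the component, and Cauchy equivalence propagates $T^nz\to x^*$ to every $z\in[x]_{\widetilde G}$. Hence $T\mid_{[x]_{\widetilde G}}$ is a Picard operator.

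The remaining assertions are then short. Assertion (2) is the special case of (1) in which $G$ is weakly connected, so $[x]_{\widetilde G}=X$. For (3) I would verify that $x^*\mapsto[x^*]_{\widetilde G}$ is a bijection from $\fix(T)$ onto $\{[x]_{\widetilde G}:x\in X_T\}$: it is well-defined since $\fix(T)\subseteq X_T$, injective because each component meeting $X_T$ carries a unique fixed point by (1), and surjective because (1) produces a fixed point inside every such component. Assertion (4) follows from (3) (or directly from (1)), and (5) from (3) by noting that $\{[x]_{\widetilde G}:x\in X_T\}$ is a singleton precisely when some $x\in X_T$ satisfies $X_T\subseteq[x]_{\widetilde G}$. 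For (6), the set $X'=\bigcup\{[x]_{\widetilde G}:x\in X_T\}$ is $T$-invariant (each component is, by Proposition \ref{proposition}), and every $z\in X'$ lies in a component meeting $X_T$ on which (1) gives $T^nz\to$ a fixed point; thus $T\mid_{X'}$ is weakly Picard. Finally (7) is the case $X_T=X$, which forces $X'=X$.
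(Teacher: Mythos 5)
Your proposal is correct and follows essentially the same route as the paper: Proposition \ref{proposition} plus sequential completeness give the limit $x^*$, Property $(\ast)$ supplies the subsequence of edges $(T^{n_k}x,x^*)\in E(G)$ used both to place $x^*$ in $[x]_{\widetilde G}$ and to prove $Tx^*=x^*$ by composing entourages, and assertions (2)--(7) are the same bookkeeping via the bijection $x^*\mapsto[x^*]_{\widetilde G}$. The only cosmetic difference is in the fixed-point estimate, where you scale the basic entourage and use $\tfrac12V\circ\tfrac12V\subseteq V$ while the paper picks $V_0$ with $V_0\circ V_0\circ V_0\subseteq V$; both are valid instances of Lemma \ref{lem1}.
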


\begin{proof} We prove each part of the theorem separately.
\begin{enumerate}
\item Let $x\in X_T$. Then $(x,Tx)\in E(G)\subseteq E(\widetilde
G)$ and hence $Tx\in[x]_{\widetilde G}$. Now, by Proposition
\ref{proposition}, the mapping $T\mid_{[x]_{\widetilde G}}$ is a
Banach ${\widetilde G}_x$-contraction and if $y\in[x]_{\widetilde
G}$, then the sequences $\{T^nx\}$ and $\{T^ny\}$ are Cauchy
equivalent. Since $X$ is sequentially complete, an argument
similar to that appeared in the proof of (i $\Rightarrow$ ii) in
Corollary \ref{corollary} establishes that there exists an $x^*\in
X$ such that $T^ny\rightarrow x^*$ for all $y\in[x]_{\widetilde
G}$. In particular, $T^nx\rightarrow x^*$. We claim that $x^*$ is
the unique fixed point of $T$ in $[x]_{\widetilde G}$. To prove
our claim, note first that since $T$ is a Banach $G$-contraction
and $(x,Tx)\in E(G)$, it follows that $(T^nx,T^{n+1}x)\in E(G)$
for all $n\geq1$, and so Property $(\ast)$ implies the existence
of a strictly increasing sequence $\{n_k\}$ of positive integers
with $(T^{n_k}x,x^*)\in E(G)$ for all $k\geq1$. Now,
$(x,Tx,\ldots,T^{n_1}x,x^*)$ is a path in $G$ and hence in
$\widetilde G$ from $x$ to $x^*$, that is, $x^*\in[x]_{\widetilde
G}$.\par To see that $x^*$ is the unique fixed point of $T$, let
$V$ be any basic entourage of $X$ and pick a member $V_0$ in
$\mathscr V$ such that $V_0\circ V_0\circ V_0\subseteq V$. Because
$T^{n_k}x\rightarrow x^*$ and $\{T^{n_k}x\}$ and $\{T^{n_k+1}x\}$
are Cauchy equivalent, we may take an integer $k\geq1$
sufficiently large so that
$$(T^{n_k}x,x^*),(T^{n_k}x,T^{n_k+1}x)\in V_0\cap E(G).$$
Because $T$ is a Banach $G$-contraction, we have
$$(T^{n_k+1}x,Tx^*)\in\alpha V_0\subseteq V_0,$$ where
$\alpha$ is the contractive constant of $T$. Therefore, by the
symmetry of $V_0$, we get
$$(x^*,Tx^*)\in V_0\circ V_0\circ
V_0\subseteq V.$$ Since $V\in\mathscr V$ was arbitrary and $X$ is
separated, it follows that $x^*=Tx^*$.\par Finally, if $y^*$ is a
fixed point for $T$ in $[x_0]_{\widetilde G}$, then
$y^*=T^ny^*\rightarrow x^*$ and so by the uniqueness of the limits
of convergent sequences in separated uniform spaces, we have
$y^*=x^*$. Consequently, $T\mid_{[x]_{\widetilde G}}$ is a Picard
operator. \item If $x\in X_T$, since $G$ is weakly connected, it
follows that $[x]_{\widetilde G}=X$. So by 1, $T$ is a Picard
operator. \item Put ${\cal C}=\{[x]_{\widetilde G}:x\in X_T\}$ and
define
$$\Gamma:\fix(T)\rightarrow \cal C$$
$$\Gamma(x)=[x]_{\widetilde G}.$$
We are going to show that $\Gamma$ is a bijection. Suppose first
that $x\in\fix(T)$. Then $(x,Tx)=(x,x)\in E(G)$. So $x\in X_T$ and
$\Gamma(x)\in\cal C$. Moreover, $x_1=x_2$ implies
$\Gamma(x_1)=\Gamma(x_2)$ for all $x_1,x_2\in\fix(T)$. Hence the
mapping $\Gamma$ is well-defined.\par To see that $\Gamma$ is
surjective, let $x$ be any point of $X_T$. Since by 1,
$T\mid_{[x]_{\widetilde G}}$ is a Picard operator, it has a unique
fixed point in $[x]_{\widetilde G}$, say $x^*$. Now we have
$\Gamma(x^*)=[x^*]_{\widetilde G}=[x]_{\widetilde G}$.\par
Finally, if $x_1$ and $x_2$ are two fixed points for $T$ such that
$$[x_1]_{\widetilde G}=\Gamma(x_1)=\Gamma(x_2)=[x_2]_{\widetilde
G},$$ then $x_1\in X_T$, and by 1, $T\mid_{[x_1]_{\widetilde G}}$
is a Picard operator. Therefore, $x_1$ and $x_2$ are two fixed
points for $T$ in $[x_1]_{\widetilde G}$ and because $T$ must have
only one fixed point in $[x_1]_{\widetilde G}$, it follows that
$x_1=x_2$. Hence $\Gamma$ is injective and consequently, it is a
bijection. \item It is an immediate consequence of 3. \item
Suppose first that $x$ is the unique fixed point for $T$. Then
$x\in X_T$ and by 3, for any $y\in X_T$, we have $[y]_{\widetilde
G}=[x]_{\widetilde G}$. So $y\in[x]_{\widetilde G}$.\par For the
converse, note first that since $X_T$ is nonempty, it follows by 4
that $T$ has at least one fixed point in $X$. Now, if
$x^*,y^*\in\fix(T)$, then $x^*,y^*\in X_T\subseteq[x]_{\widetilde
G}$ and so $[x^*]_{\widetilde G}=[y^*]_{\widetilde
G}=[x]_{\widetilde G}$. Consequently, the one-to-one
correspondence in 3 implies that $x^*=y^*$. \item If
$X_T=\emptyset$, then so is $X'$ and vice versa, and there is
nothing to prove. So let $x\in X'$. Then there exists an $x_0\in
X_T$ such that $x\in[x_0]_{\widetilde G}$. Since, by Assertion 1,
$T\mid_{[x_0]_{\widetilde G}}$ is a Picard operator, it follows
that the sequence $\{T^nx\}$ converges to a fixed point of $T$.
Therefore, $T\mid_{X'}$ is a weakly Picard operator. \item If
$X_T=X$, then the set $X'$ in 6 coincides with $X$. Hence it is
concluded from 6 that $T$ is a weakly Picard operator.
\end{enumerate}
\end{proof}

\begin{cor} Let $X$ be sequentially complete and
satisfy Property $(\ast)$. Then the following statements are
equivalent:
\begin{description}
\item[\rm i)] $G$ is weakly connected; \item[\rm ii)] Every Banach
$G$-contraction $T:X\rightarrow X$ with $X_T\neq\emptyset$ is a
Picard operator; \item[\rm iii)] Every Banach $G$-contraction has
at most one fixed point in $X$.
\end{description}
\end{cor}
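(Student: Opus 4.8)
The plan is to prove the three equivalences $(\mathrm{i}) \Rightarrow (\mathrm{ii}) \Rightarrow (\mathrm{iii}) \Rightarrow (\mathrm{i})$ in a cycle, leaning heavily on the two main theorems already established and on the preceding Theorem~\ref{main1}. Since the hypotheses here add sequential completeness and Property~$(\ast)$ to the standing assumptions, I would recognize that Theorem~\ref{main2} is available in full. The cleanest route is therefore to show each implication by invoking the appropriate part of Theorem~\ref{main2} or Theorem~\ref{main1} rather than reproving anything from scratch.

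First I would prove $(\mathrm{i}) \Rightarrow (\mathrm{ii})$. Assume $G$ is weakly connected and let $T$ be a Banach $G$-contraction with $X_T \neq \emptyset$. This is exactly the hypothesis of Assertion~2 of Theorem~\ref{main2}, which directly concludes that $T$ is a Picard operator. So this implication is a one-line appeal.

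Next, $(\mathrm{ii}) \Rightarrow (\mathrm{iii})$. Here I would argue by separating cases on whether $X_T$ is empty. Suppose (ii) holds and let $T$ be any Banach $G$-contraction. If $X_T = \emptyset$, then by Assertion~4 of Theorem~\ref{main2}, $T$ has no fixed point at all, so vacuously it has at most one. If $X_T \neq \emptyset$, then (ii) says $T$ is a Picard operator, and a Picard operator has by definition a \emph{unique} fixed point, hence at most one. Either way $T$ has at most one fixed point, giving (iii). The only subtlety is remembering to handle the $X_T = \emptyset$ case, for which Assertion~4 is the right tool.

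Finally, $(\mathrm{iii}) \Rightarrow (\mathrm{i})$ follows immediately from the equivalence already contained in Theorem~\ref{main1}: condition (iii) of that theorem (``each Banach $G$-contraction has at most one fixed point'') is equivalent to (i) there. I expect no real obstacle in any step, since the substantive work was carried out in Theorems~\ref{main1} and~\ref{main2}; the corollary is essentially a bookkeeping assembly of those results. The one point deserving care is the logical structure of $(\mathrm{ii}) \Rightarrow (\mathrm{iii})$, where the emptiness of $X_T$ must be treated explicitly so that the claim ``at most one fixed point'' is justified even when (ii)'s hypothesis $X_T \neq \emptyset$ does not apply.
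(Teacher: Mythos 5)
Your proposal is correct and follows essentially the same route as the paper: $(\mathrm{i})\Rightarrow(\mathrm{ii})$ by Assertion~2 of Theorem~\ref{main2}, $(\mathrm{ii})\Rightarrow(\mathrm{iii})$ by a case split on whether $X_T$ is empty, and $(\mathrm{iii})\Rightarrow(\mathrm{i})$ by Theorem~\ref{main1}. The only cosmetic difference is that in the empty case the paper uses the elementary inclusion $\fix(T)\subseteq X_T$ directly rather than citing Assertion~4 of Theorem~\ref{main2}, which amounts to the same thing.
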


\begin{proof}
(i $\Rightarrow$ ii) It is an immediate consequence of Assertion 2
of Theorem \ref{main2}.\par (ii $\Rightarrow$ iii) Let
$T:X\rightarrow X$ be a Banach $G$-contraction. If
$X_T=\emptyset$, then $T$ is fixed point free since
$\fix(T)\subseteq X_T$. Otherwise, if $X_T\neq\emptyset$, then it
follows by the hypotheses that $T$ is a Picard operator and so it
has a unique fixed point. Therefore, $T$ has at most one fixed
point in $X$.\par (iii $\Rightarrow$ i) It follows immediately
from Theorem \ref{main1}.
\end{proof}

Setting $G=G_1$ (respectively, $G=G_2$) in Theorem \ref{main2}, we
get the following results in partially ordered uniform spaces.

\begin{cor} Let $X$ be sequentially complete
and $\preceq$ be a partial order on $X$ such that $X$ has the
following property:
\begin{description}
\item[]\hspace{5.5mm} If $\{x_n\}$ is a nondecreasing sequence in
$X$ converging to some $x$, then there exists a subsequence
$\{x_{n_k}\}$ of $\{x_n\}$ such that $x_{n_k}\preceq x$ for all
$k\geq1$.
\end{description}
Then an order Banach contraction $T:X\rightarrow X$ has a fixed
point in $X$ if and only if there exists an $x_0\in X$ such that
$x_0\preceq Tx_0$.
\end{cor}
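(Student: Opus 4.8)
The plan is to obtain this corollary directly from Theorem \ref{main2} by specializing the graph to $G=G_1$, where $E(G_1)=\{(x,y)\in X\times X:x\preceq y\}$. The first thing I would check is that the three hypotheses of Theorem \ref{main2} translate correctly under this choice. Sequential completeness is assumed verbatim. For Property $(\ast)$, I would unwind its meaning for $G_1$: the condition $(x_n,x_{n+1})\in E(G_1)$ says exactly that $x_n\preceq x_{n+1}$, i.e.\ that $\{x_n\}$ is nondecreasing, and the conclusion $(x_{n_k},x)\in E(G_1)$ says exactly that $x_{n_k}\preceq x$. Hence Property $(\ast)$ for $G_1$ is word for word the monotonicity property assumed in the statement, so nothing beyond a direct rephrasing is needed here.

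Next I would record the identification, already made in the Example following Definition \ref{gcontraction}, that Banach $G_1$-contractions are precisely the nondecreasing order-contractive mappings on $X$: Condition (B1) for $G_1$ amounts to $T$ being nondecreasing with respect to $\preceq$, and Condition (B2) is the order-contractive estimate on comparable pairs. Thus the hypothesis that $T$ is an order Banach contraction is exactly the hypothesis that $T$ is a Banach $G_1$-contraction, and every conclusion of Theorem \ref{main2} becomes available for $T$.

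With these identifications in place, the corollary is immediate from Assertion 4 of Theorem \ref{main2}, which states that $T$ has a fixed point in $X$ if and only if $X_T$ is nonempty. The last step is to translate the set $X_T=\{x\in X:(x,Tx)\in E(G)\}$ under $G=G_1$: here $(x,Tx)\in E(G_1)$ means precisely $x\preceq Tx$, so $X_T=\{x\in X:x\preceq Tx\}$, and the nonemptiness of $X_T$ is exactly the existence of some $x_0\in X$ with $x_0\preceq Tx_0$. Combining these, $T$ has a fixed point if and only if such an $x_0$ exists, which is the claim. I do not anticipate any genuine obstacle; the entire argument is a dictionary translation between the graph-theoretic formulation of Theorem \ref{main2} and the order-theoretic language of the corollary, and the only point requiring care is confirming that the directions of the inequalities in Property $(\ast)$ and in the definition of $X_T$ match those induced by $E(G_1)$.
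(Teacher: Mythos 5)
Your proposal is correct and coincides with the paper's own derivation: the paper obtains this corollary precisely by setting $G=G_1$ in Theorem \ref{main2} and invoking Assertion 4, with the same dictionary between $E(G_1)$, Property $(\ast)$, and $X_T$ that you spell out. No issues.
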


\begin{cor} Let $X$ be sequentially complete
and $\preceq$ be a partial order on $X$ such that $X$ has the
following property:
\begin{description}
\item[]\hspace{5.5mm} If $\{x_n\}$ is a sequence in $X$ converging
to some $x$ whose consecutive terms are pairwise comparable, then
there exists a subsequence $\{x_{n_k}\}$ of $\{x_n\}$ such that
$x_{n_k}$ is comparable to $x$ for all $k\geq1$.
\end{description}
Then an order Banach contraction $T:X\rightarrow X$ has a fixed
point in $X$ if and only if there exists an $x_0\in X$ such that
$x_0$ is comparable to $Tx_0$.
\end{cor}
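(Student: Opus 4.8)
The plan is to obtain this corollary as the special case $G = G_2$ of Theorem \ref{main2}, exactly as announced immediately before the statement. First I would verify that $G_2$ is admissible for the theorem. Since $\preceq$ is reflexive, $x \preceq x$ holds for every $x$, so $(x,x) \in E(G_2)$ and hence $\Delta(X) \subseteq E(G_2)$; together with $V(G_2) = X$ this matches the standing assumptions on the graph. Moreover, by the discussion of $G_2$-contractions in the example following Definition \ref{gcontraction}, an order Banach contraction is precisely a Banach $G_2$-contraction, so $T$ lies within the scope of Theorem \ref{main2}.

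Next I would translate the analytic hypotheses into graph-theoretic language. A pair $(x,y)$ belongs to $E(G_2)$ exactly when $x$ and $y$ are comparable, so Property $(\ast)$ specialized to $G_2$ asserts: whenever $\{x_n\}$ converges to $x$ and consecutive terms $x_n, x_{n+1}$ are comparable, some subsequence $\{x_{n_k}\}$ has each $x_{n_k}$ comparable to $x$. This is verbatim the property assumed on $X$ in the corollary. Hence $X$, equipped with $G_2$, satisfies all the standing hypotheses of Theorem \ref{main2}, sequential completeness being assumed directly.

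With these identifications, the equivalence is immediate from Assertion 4 of Theorem \ref{main2}, which says that $T$ has a fixed point in $X$ if and only if $X_T \neq \emptyset$. Here $X_T = \{x \in X : (x,Tx) \in E(G_2)\}$, and $(x,Tx) \in E(G_2)$ means precisely that $x$ is comparable to $Tx$. Therefore $X_T \neq \emptyset$ is equivalent to the existence of an $x_0 \in X$ with $x_0$ comparable to $Tx_0$, which is exactly the stated condition.

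I anticipate no genuine difficulty, as all the substantive work resides in Theorem \ref{main2}; the only care required is the bookkeeping that confirms the order-theoretic formulation of the corollary is the faithful translation of the graph-theoretic hypotheses under $G = G_2$---in particular that reflexivity supplies the loops in $E(G_2)$ and that Property $(\ast)$ specializes correctly. Once this dictionary is fixed, the corollary is a direct instance of Assertion 4.
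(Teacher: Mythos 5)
Your proposal is correct and follows exactly the route the paper intends: the corollary is obtained by setting $G=G_2$ in Theorem \ref{main2} and invoking Assertion 4, with the only work being the dictionary between comparability and membership in $E(G_2)$. The paper states this as an immediate specialization without further proof, so your verification of the loops via reflexivity and of Property $(\ast)$ is a faithful (indeed slightly more careful) rendering of the same argument.
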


Now we are going to introduce two new types of continuity of
mappings from $X$ into itself. The idea of this definition is
taken from \cite[Definitions 2.2 and 2.4]{jac}.

\begin{defn}\rm Let $T:X\rightarrow X$ be an arbitrary mapping. We
say that
\begin{description}
\item[\rm i)] $T$ is orbitally continuous on $X$ whenever
$T^{p_n}x\rightarrow y$ implies $T(T^{p_n}x)\rightarrow Ty$ as
$n\rightarrow\infty$ if $x,y\in X$ and $\{p_n\}$ is a sequence of
positive integers. \item[\rm ii)] $T$ is orbitally $G$-continuous
on $X$ whenever $T^{p_n}x\rightarrow y$ implies
$T(T^{p_n}x)\rightarrow Ty$ as $n\rightarrow\infty$ if $x,y\in X$
and $\{p_n\}$ is a sequence of positive integers with
$(T^{p_n}x,T^{p_{n+1}}x)\in E(G)$ for $n=1,2,\ldots$ .
\end{description}
\end{defn}

It is clear that continuity implies orbital continuity and orbital
continuity implies orbital $G$-continuity for any graph $G$. In
the next example, we see that the converses of these statements
are not true.

\begin{exm}\rm Suppose that the set $X=[0,+\infty)$ is endowed with the uniformity
induced by the Euclidean metric.
\begin{enumerate}
\item Define a mapping $T$ from $X$ into itself by $Tx=1$ if
$x\neq0$ and $T0=0$. Then it is obvious that $T$ is not continuous
on $X$. To see the orbital continuity of $T$ on $X$, let $x,y\in
X$ and $\{p_n\}$ be a sequence of positive integers such that
$T^{p_n}x\rightarrow y$. If $x=0$, then $\{T^{p_n}x\}$ is the
constant sequence zero and hence $y=0$. Otherwise, if $x\neq0$,
then $\{T^{p_n}x\}$ is the constant sequence $1$ and hence $y=1$.
Therefore, in both cases, we have $T(T^{p_n}x)\rightarrow Ty$.
\item Next, endow $X$ with the graph $G_3$ defined by $V(G_3)=X$
and $E(G_3)=\Delta(X)$. Then the mapping $T:X\rightarrow X$ with
the rule
$$Tx=\left\{\begin{array}{cc}
\displaystyle\frac{x^2}2&x\neq0\\\\
1&x=0
\end{array}\right.$$
for all $x\in X$ is orbitally $G$-continuous on $X$. For, let
$x,y\in X$ and $\{p_n\}$ be an arbitrary sequence of positive
integers such that $T^{p_n}x\rightarrow y$ and
$(T^{p_n}x,T^{p_{n+1}}x)\in E(G)$ for $n=1,2,\ldots\,$. Then
$\{T^{p_n}x\}$ is a constant sequence, that is,
$$T^{p_1}x=\cdots=T^{p_n}x=\cdots=y.$$
Hence $T(T^{p_n}x)=Ty\rightarrow Ty$. On the other hand, if
$x=y=0$ and $p_n=n$ for $n=1,2,\ldots\,$, then we see that
$T^{p_n}x\rightarrow y$, whereas $T(T^{p_n})x\nrightarrow Ty$. So
$T$ fails to be orbitally continuous on $X$.
\end{enumerate}
\end{exm}

Using these new notions of continuity, we give some another
sufficient conditions for a Banach $G$-contraction to have a fixed
point and also to be a Picard or a weakly Picard operator when the
uniform space $X$ does not necessarily have Property $(\ast)$.

\begin{thm}\label{main3}
Let $X$ be complete and $T:X\rightarrow X$ be an orbitally
continuous Banach $G$-contraction. Then the following assertions
hold:
\begin{enumerate}
\item If $x\in X$ is such that $Tx\in[x]_{\widetilde G}$, then
there exists a unique $x^*\in \fix(T)$ such that $T^ny\rightarrow
x^*$ for all $y\in[x]_{\widetilde G}$. In particular, if
$x^*\in[x]_{\widetilde G}$, then $T\mid_{[x]_{\widetilde G}}$ is a
Picard operator. \item If $G$ is weakly connected, then $T$ is a
Picard operator. \item $T$ has a fixed point in $X$ if and only if
there exists an $x\in X$ such that $Tx\in[x]_{\widetilde G}$.
\item $T\mid_{X''}$ is a weakly Picard operator, where
$X''=\bigcup\{[x]_{\widetilde G}:Tx\in[x]_{\widetilde G}\}$. \item
If $Tx\in[x]_{\widetilde G}$ for all $x\in X$, then $T$ is a
weakly Picard operator.
\end{enumerate}
\end{thm}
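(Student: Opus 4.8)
The plan is to prove Assertion 1 in full and then read off Assertions 2--5 as formal corollaries, since Assertion 1 carries all of the analytic content. Fix $x\in X$ with $Tx\in[x]_{\widetilde G}$. First I would invoke Proposition \ref{proposition}: it guarantees that $[x]_{\widetilde G}$ is $T$-invariant, that $T\mid_{[x]_{\widetilde G}}$ is a Banach $\widetilde G_x$-contraction, and---crucially---that the orbits $\{T^nu\}$ and $\{T^nv\}$ are Cauchy equivalent for all $u,v\in[x]_{\widetilde G}$. In particular $\{T^nx\}$ is Cauchy, so completeness of $X$ yields a limit $x^*$, and Cauchy equivalence upgrades this to $T^ny\to x^*$ for every $y\in[x]_{\widetilde G}$.

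The decisive step is to show $x^*\in\fix(T)$, and here orbital continuity is what replaces Property $(\ast)$. Applying the definition of orbital continuity with the index sequence $p_n=n$, the convergence $T^nx\to x^*$ forces $T(T^nx)=T^{n+1}x\to Tx^*$. But $\{T^{n+1}x\}$ is merely a shift of $\{T^nx\}$ and therefore also converges to $x^*$; since $X$ is separated, uniqueness of limits gives $Tx^*=x^*$. The uniqueness of $x^*$ among fixed points attracting the whole component is then immediate, as any competitor would also be a limit of $\{T^nx\}$. For the ``in particular'' clause, if $x^*\in[x]_{\widetilde G}$ then any fixed point $z^*\in[x]_{\widetilde G}$ satisfies $z^*=T^nz^*\to x^*$, whence $z^*=x^*$, so $T\mid_{[x]_{\widetilde G}}$ is a Picard operator.

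The remaining assertions follow formally from Assertion 1. For Assertion 2, weak connectivity gives $[x]_{\widetilde G}=X$ for every $x$, so choosing any $x$ we have $Tx\in X=[x]_{\widetilde G}$ and automatically $x^*\in[x]_{\widetilde G}$, making $T$ a Picard operator on all of $X$. For Assertion 3, the forward direction uses that a fixed point $x^*$ satisfies $Tx^*=x^*\in[x^*]_{\widetilde G}$ (the diagonal lies in $E(G)\subseteq E(\widetilde G)$), while the reverse direction is precisely the existence part of Assertion 1. For Assertion 4, the $T$-invariance of each component from Proposition \ref{proposition} shows that $X''$ is $T$-invariant, and every $z\in X''$ lies in some component $[x]_{\widetilde G}$ with $Tx\in[x]_{\widetilde G}$, so Assertion 1 gives $T^nz\to x^*\in\fix(T)$; hence $T\mid_{X''}$ is weakly Picard. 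Assertion 5 is the special case $X''=X$.

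The only genuine obstacle is locating the fixed point, and the whole point of the theorem is that orbital continuity dispatches it cleanly: once $\{T^nx\}$ is known to be Cauchy and convergent, continuity along the orbit identifies the limit as a fixed point without any appeal to Property $(\ast)$. I would emphasize that, in contrast with Theorem \ref{main2}, one cannot assert $x^*\in[x]_{\widetilde G}$ in general---the orbit may converge to a point lying outside its own component---which is exactly why the Picard conclusion in Assertion 1 must be stated conditionally on $x^*\in[x]_{\widetilde G}$.
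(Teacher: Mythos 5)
Your proposal is correct and follows essentially the same route as the paper's own proof: Proposition \ref{proposition} supplies the Cauchy equivalence of orbits on $[x]_{\widetilde G}$, completeness produces the limit $x^*$, orbital continuity applied to $p_n=n$ identifies $x^*$ as a fixed point via uniqueness of limits in the separated space, and Assertions 2--5 are read off formally exactly as in the paper. The only cosmetic difference is that you spell out the justification of the ``in particular'' clause, which the paper leaves implicit.
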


\begin{proof}
\begin{enumerate}
\item Let $x\in X$ be such that $Tx\in[x]_{\widetilde G}$. Then it
follows by Proposition \ref{proposition} that the sequence
$\{T^nx\}$ is Cauchy in $X$ and so by the sequential completeness
of $X$, there exists an $x^*\in X$ such that $T^nx\rightarrow
x^*$. By the orbital continuity of $T$ on $X$, it follows that
$T^{n+1}x\rightarrow Tx^*$, and since $X$ is separated, we have
$x^*=Tx^*$, that is, $x^*\in\fix(T)$. Now, if $y\in[x]_{\widetilde
G}$ is arbitrary, then using Proposition \ref{proposition} once
more, we see that the sequences $\{T^nx\}$ and $\{T^ny\}$ are
Cauchy equivalent and hence an argument similar to that appeared
in the proof of (i $\Rightarrow$ ii) in Corollary \ref{corollary}
establishes that $T^ny\rightarrow x^*$. The uniqueness of $x^*$
follows immediately from the uniqueness of limits of convergent
sequences in separated uniform spaces. \item Take any arbitrary
$x\in X$. Because $G$ is weakly connected, we have
$[x]_{\widetilde G}=X$; in particular, $Tx\in[x]_{\widetilde G}$.
So by 1, $T$ is a Picard operator. \item If $x\in X$ is a fixed
point for $T$, then $Tx=x\in[x]_{\widetilde G}$. The converse
follows immediately from 1. \item If there is no $x\in X$ such
that $Tx\in[x]_{\widetilde G}$, then $X''$ is empty and vice
versa. So, in this case, there is nothing to prove. But if $x\in
X''$, then there exists an $x_0\in X$ such that both $Tx_0$ and
$x$ belong to $[x_0]_{\widetilde G}$. Now, by 1, we see that the
sequence $\{T^nx\}$ converges to a fixed point of $T$ and hence
$T\mid_{X''}$ is a weakly Picard operator. \item If
$Tx\in[x]_{\widetilde G}$, then the set $X''$ in 4 coincides with
$X$. Hence, by 4, $T$ is a weakly Picard operator.
\end{enumerate}
\end{proof}

The next example shows that the fixed point $x^*$ in Assertion 1
of Theorem \ref{main3} does not necessarily belong to
$[x]_{\widetilde G}$, i.e., the mapping $T\mid_{[x]_{\widetilde
G}}$ need not be a Picard operator. So if $X$ does not have
Property $(\ast)$, it seems that we cannot improve Theorem
\ref{main3} by formulating Assertions 3 and 5 of Theorem
\ref{main2} in a similar way.

\begin{exm}\rm Consider $\Bbb{R}$ with the uniformity induced by
the Euclidean metric and define a partial order $\preceq$ by
$$x\preceq y\ \Longleftrightarrow\ \Big(x=y\ \vee\
\big(x,y\in[1,4]\setminus\big\{\frac52\big\},\ x\leq
y\big)\Big)\qquad(x,y\in\Bbb{R}).$$ Define a mapping
$T:\Bbb{R}\rightarrow\Bbb{R}$ by
$$Tx=\left\{\begin{array}{cc}
2x&x<1\\\\
\displaystyle\frac{x+5}3&1\leq x\leq 4\\\\
2x-5&x>4
\end{array}\right.$$
for all $x\in\Bbb{R}$. Then it is easy to see that $\Bbb{R}$ is
(sequentially) complete and $T$ is a(n) (orbitally) continuous
Banach $G_1$-contraction on $\Bbb{R}$ for each contractive
constant $\alpha\in[\frac13,1)$. Now,
$$T4=3\in[1,4]\setminus\big\{\frac52\big\}=[4]_{{\widetilde
G}_1},$$ and $\fix(T)=\{0,\frac52,5\}$, which does not intersect
$[4]_{{\widetilde G}_1}$.
\end{exm}

\begin{cor}\label{cormain3} If $X$ is sequentially complete,
then the following statements are equivalent:
\begin{description}
\item[\rm i)] $G$ is weakly connected; \item[\rm ii)] Each
orbitally continuous Banach $G$-contraction on $X$ is a Picard
operator; \item[\rm iii)] Each orbitally continuous Banach
$G$-contraction has at most one fixed point in $X$.
\end{description}
In particular, if $\widetilde G$ is disconnected, then there
exists an orbitally continuous Banach $G$-contraction on $X$ with
at least two fixed points.
\end{cor}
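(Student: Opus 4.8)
The plan is to prove the three-way equivalence together with the final remark by combining the machinery already established, principally Theorem \ref{main3} and Theorem \ref{main1}. The natural cyclic scheme is (i~$\Rightarrow$~ii~$\Rightarrow$~iii~$\Rightarrow$~i), after which the ``in particular'' claim follows as the contrapositive witness supplied by the construction in Theorem \ref{main1}. First I would observe that the implication (i~$\Rightarrow$~ii) is essentially immediate from Assertion~2 of Theorem \ref{main3}: if $G$ is weakly connected and $T$ is an orbitally continuous Banach $G$-contraction, that assertion says directly that $T$ is a Picard operator. The only point to check is that sequential completeness suffices here rather than full completeness, since Theorem \ref{main3} is stated for complete $X$; but inspecting its proof shows that only the sequential completeness of $X$ is used (the Cauchy sequences $\{T^nx\}$ arising from Proposition \ref{proposition} are genuine sequences, and convergence is extracted via sequential completeness), so the implication goes through under the weaker hypothesis stated in the corollary.

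Next, for (ii~$\Rightarrow$~iii) I would argue as follows. Suppose every orbitally continuous Banach $G$-contraction on $X$ is a Picard operator, and let $T$ be any orbitally continuous Banach $G$-contraction. By hypothesis $T$ is a Picard operator, hence it possesses a \emph{unique} fixed point in $X$, so in particular $T$ has at most one fixed point. This implication is therefore purely formal once the definitions are unwound. For (iii~$\Rightarrow$~i), the cleanest route is to reduce to Theorem \ref{main1}: the contrapositive of (iii~$\Rightarrow$~i) is ``$G$ not weakly connected $\Rightarrow$ some orbitally continuous Banach $G$-contraction has two fixed points.'' I would revisit the mapping $T$ constructed in the proof of (iii~$\Rightarrow$~i) of Theorem \ref{main1}, namely the two-valued map sending $[x_0]_{\widetilde G}$ to $x_0$ and its complement to $y_0$. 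That proof already verified this $T$ is a Banach $G$-contraction with $\fix(T)=\{x_0,y_0\}$, two distinct points. The one genuinely new thing to verify is that this $T$ is \emph{orbitally continuous}, which is where I expect the only real (though still mild) work to lie.

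The orbital continuity check is the main obstacle, but it should be short. The key structural fact is that $T$ takes only the two values $x_0$ and $y_0$, and that each of these is a fixed point lying in its own component; consequently, for any $x$, the orbit $\{T^nx\}_{n\geq 1}$ is eventually constant — indeed $Tx\in\{x_0,y_0\}$ already, and $T^2x = Tx$ since both $x_0,y_0\in\fix(T)$. Thus for any $x\in X$ and any sequence $\{p_n\}$ of positive integers with $p_n\geq 1$, the sequence $\{T^{p_n}x\}$ is the constant sequence $Tx\in\{x_0,y_0\}$. Hence if $T^{p_n}x\to y$ then necessarily $y=Tx$, and $T(T^{p_n}x)$ is the constant sequence $T(Tx)=Tx=y$ (using $Tx\in\fix(T)$), which converges to $Ty$ because $Ty = T(Tx)=Tx = y = Ty$ trivially matches. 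I would phrase this by noting that the orbit from the first iterate onward is constant, so both $T^{p_n}x\to y$ and $T(T^{p_n}x)\to Ty$ hold automatically, establishing orbital continuity. This produces an orbitally continuous Banach $G$-contraction with two fixed points, contradicting (iii), which completes (iii~$\Rightarrow$~i) and simultaneously proves the final ``in particular'' statement about disconnected $\widetilde G$. I would present the argument in precisely this order, flagging the reduction of Theorem \ref{main3} to sequential completeness and the orbital-continuity verification as the two substantive points.
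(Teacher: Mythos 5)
Your proposal is correct and follows essentially the same route as the paper: (i $\Rightarrow$ ii) from Assertion 2 of Theorem \ref{main3}, (ii $\Rightarrow$ iii) trivially, and (iii $\Rightarrow$ i) by checking that the two-valued map from the proof of Theorem \ref{main1} is orbitally continuous because its orbits are constant from the first iterate onward. Your explicit remark that Theorem \ref{main3} really only needs sequential completeness is a careful touch that the paper leaves implicit, but it does not change the argument.
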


\begin{proof} (i $\Rightarrow$ ii) It follows immediately from Assertion 2 of Theorem
\ref{main3}.\par (ii $\Rightarrow$ iii) It is obvious since each
Picard operator has exactly one fixed point.\par (iii
$\Rightarrow$ i) According to the proof of (iii $\Rightarrow$ i)
in Theorem \ref{main1}, it suffices to show that the mapping
$T:X\rightarrow X$ defined there is orbitally continuous on $X$.
To this end, let two points $x,y\in X$ and a sequence $\{p_n\}$ of
positive integers be given such that $T^{p_n}x\rightarrow y$. Then
the sequence $\{T^{p_n}x\}$ is either the constant sequence $x_0$
or the constant sequence $y_0$. If the former holds, then we have
$y=x_0$ since $X$ is separated. Hence
$$T(T^{p_n}x)=Tx_0=x_0\rightarrow x_0=Tx_0=Ty.$$
But if the latter holds, then a similar argument shows that
$T(T^{p_n}x)\rightarrow Ty$. Therefore, the mapping $T$ is
orbitally continuous on $X$.
\end{proof}

There is another version of Theorem \ref{main3}, where the orbital
continuity of $T$ is replaced with the weaker condition of orbital
$G$-continuity and one hypothesis of Theorem \ref{main3} is
changed a bit.

\begin{thm}\label{main4}
Let $X$ be sequentially complete and $T:X\rightarrow X$ be an
orbitally $G$-continuous Banach $G$-contraction. Then the
following assertions hold:
\begin{enumerate}
\item If $x\in X_T$, then there exists a unique $x^*\in\fix(T)$
such that $T^ny\rightarrow x^*$ for all $y\in[x]_{\widetilde G}$.
In particular, if $x^*\in[x]_{\widetilde G}$, then
$T\mid_{[x]_{\widetilde G}}$ is a Picard operator. \item If $X_T$
is nonempty and $G$ is weakly connected, then $T$ is a Picard
operator. \item $T$ has a fixed point in $X$ if and only if $X_T$
is nonempty. \item $T\mid_{X'}$ is a weakly Picard operator, where
$X'=\bigcup\{[x]_{\widetilde G}:x\in X_T\}$. \item If $X_T=X$,
then $T$ is a weakly Picard operator.
\end{enumerate}
\end{thm}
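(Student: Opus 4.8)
The plan is to prove Assertion 1 in full and then deduce Assertions 2--5 from it by the same bookkeeping used in the proof of Theorem \ref{main3}. So fix $x\in X_T$. Since $(x,Tx)\in E(G)\subseteq E(\widetilde G)$ we have $Tx\in[x]_{\widetilde G}$, and Proposition \ref{proposition} applies: the component $[x]_{\widetilde G}$ is $T$-invariant, $T\mid_{[x]_{\widetilde G}}$ is a Banach ${\widetilde G}_x$-contraction, and the sequences $\{T^nx\}$ and $\{T^ny\}$ are Cauchy equivalent for every $y\in[x]_{\widetilde G}$. In particular $\{T^nx\}$ is Cauchy, so by sequential completeness it converges to some $x^*\in X$.

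The crucial step is to verify that $x^*$ is a fixed point, and this is exactly where the replacement of orbital continuity by orbital $G$-continuity forces the replacement of the hypothesis $Tx\in[x]_{\widetilde G}$ (as in Theorem \ref{main3}) by the stronger $x\in X_T$. Indeed, since $x\in X_T$ we have $(x,Tx)\in E(G)$, and because $T$ preserves the edges of $G$ by {\rm(B1)}, an easy induction gives $(T^nx,T^{n+1}x)\in E(G)$ for all $n\geq0$. Taking $p_n=n$ in the definition of orbital $G$-continuity, the orbit $\{T^{p_n}x\}=\{T^nx\}$ converges to $x^*$ and satisfies $(T^{p_n}x,T^{p_{n+1}}x)\in E(G)$; hence $T(T^{p_n}x)=T^{n+1}x\rightarrow Tx^*$. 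But $\{T^{n+1}x\}$ is a tail of $\{T^nx\}$ and so also converges to $x^*$, and since $X$ is separated the limit is unique, whence $Tx^*=x^*$, i.e. $x^*\in\fix(T)$.

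To finish Assertion 1, fix any $y\in[x]_{\widetilde G}$. By Cauchy equivalence of $\{T^nx\}$ and $\{T^ny\}$ together with $T^nx\rightarrow x^*$, a three-entourage (triangle) argument identical to the one in Corollary \ref{corollary} yields $T^ny\rightarrow x^*$; uniqueness of $x^*$ is immediate from uniqueness of limits in separated uniform spaces. For the ``in particular'' clause, if $x^*\in[x]_{\widetilde G}$ then $x^*$ is the unique fixed point of the self-map $T\mid_{[x]_{\widetilde G}}$ lying in $[x]_{\widetilde G}$ and every orbit in $[x]_{\widetilde G}$ converges to it, so $T\mid_{[x]_{\widetilde G}}$ is a Picard operator.

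Assertions 2--5 then follow from the corresponding steps in the proof of Theorem \ref{main3}, with ``$Tx\in[x]_{\widetilde G}$'' replaced throughout by ``$x\in X_T$''. For Assertion 2, weak connectivity gives $[x]_{\widetilde G}=X$ for $x\in X_T$, so $x^*\in X=[x]_{\widetilde G}$ and the ``in particular'' clause makes $T$ a Picard operator. For Assertion 3, a fixed point $x$ satisfies $(x,Tx)=(x,x)\in\Delta(X)\subseteq E(G)$, so $x\in X_T$, while the converse is Assertion 1. Assertions 4 and 5 use that $X'=\bigcup\{[x]_{\widetilde G}:x\in X_T\}$ is $T$-invariant, being a union of invariant components, and that each orbit starting in $X'$ converges to a fixed point by Assertion 1. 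I expect the fixed-point step of the second paragraph to be the only genuine obstacle: the definition of orbital $G$-continuity applies only to orbits whose consecutive iterates are $G$-edges, and it is precisely the hypothesis $x\in X_T$, via edge preservation, that guarantees this for the natural orbit $\{T^nx\}$.
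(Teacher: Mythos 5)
Your proposal is correct and follows essentially the same route as the paper's own proof: the key step of using $x\in X_T$ together with edge preservation to obtain $(T^nx,T^{n+1}x)\in E(G)$, so that orbital $G$-continuity (with $p_n=n$) yields $T^{n+1}x\rightarrow Tx^*$ and hence $Tx^*=x^*$ by separatedness, is exactly the paper's argument, and the deduction of Assertions 2--5 matches as well.
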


\begin{proof}
\begin{enumerate}
\item Let $x\in X_T$. Then $Tx\in[x]_{\widetilde G}$ and it
follows by Proposition \ref{proposition} that the sequence
$\{T^nx\}$ is Cauchy in $X$. Because $X$ is sequentially complete,
there exists an $x^*\in X$ such that $T^nx\rightarrow x^*$. Since
$(x,Tx)\in E(G)$ and $T$ preserves the edges of $G$, we have
$(T^nx,T^{n+1}x)\in E(G)$ for all $n\geq1$ and it follows by the
orbital $G$-continuity of $T$ on $X$ that $T^{n+1}x\rightarrow
Tx^*$. So $x^*=Tx^*$ because $X$ is separated. Now, take any
arbitrary $y\in[x]_{\widetilde G}$. Then using Proposition
\ref{proposition} again, it is concluded that the sequences
$\{T^nx\}$ and $\{T^ny\}$ are Cauchy equivalent and an argument
similar to that appeared in the proof of (i $\Rightarrow$ ii) in
Corollary \ref{corollary} establishes that $T^ny\rightarrow x^*$.
The uniqueness of $x^*$ follows immediately from the uniqueness of
limits of convergent sequences in separated uniform spaces. \item
Choose any $x\in X_T$. Since $G$ is weakly connected, we have
$[x]_{\widetilde G}=X$, and thus by 1, $T$ is a Picard operator.
\item If $x$ is a fixed point for $T$, then clearly $x\in X_T$.
The converse follows immediately from 1. \item If $X_T$ is empty
then so is $X'$ and vice versa. So, in this case, there is nothing
to prove. If $x\in X$, then there exists an $x_0\in X_T$ such that
$x\in[x_0]_{\widetilde G}$. So by 1, the sequence $\{T^nx\}$
converges to a fixed point of $T$. Whence $T\mid_{X'}$ is a weakly
Picard operator. \item If $X_T=X$, then the set $X'$ in 4
coincides with $X$. Therefore, by 4, $T$ is a weakly Picard
operator.
\end{enumerate}
\end{proof}

Similar to Corollary \ref{cormain3}, we have the following:

\begin{cor} If $X$ is sequentially complete, then the following
statements are equivalent:
\begin{description}
\item[\rm i)] $G$ is weakly connected; \item[\rm ii)] Each
orbitally $G$-continuous Banach $G$-contraction $T:X\rightarrow X$
with $X_T\neq\emptyset$ is a Picard operator; \item[\rm iii)] Each
orbitally $G$-continuous Banach $G$-contraction has at most one
fixed point in $X$.
\end{description}
In particular, if $\widetilde G$ is disconnected, then there
exists an orbitally $G$-continuous Banach $G$-contraction on $X$
with at least two fixed points.
\end{cor}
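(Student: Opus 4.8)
The plan is to mimic the proof of Corollary~\ref{cormain3} almost verbatim, replacing every appeal to Theorem~\ref{main3} by the corresponding appeal to Theorem~\ref{main4} and replacing ``orbitally continuous'' by ``orbitally $G$-continuous'' throughout. I would establish the cycle (i $\Rightarrow$ ii $\Rightarrow$ iii $\Rightarrow$ i) and then read off the final ``in particular'' clause as the contrapositive of (iii $\Rightarrow$ i).

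For (i $\Rightarrow$ ii) I would simply invoke Assertion 2 of Theorem~\ref{main4}: if $G$ is weakly connected and $T$ is an orbitally $G$-continuous Banach $G$-contraction with $X_T\neq\emptyset$, then $T$ is a Picard operator. For (ii $\Rightarrow$ iii) I would follow the dichotomy already used in the earlier corollary stated under Property $(\ast)$: given an arbitrary orbitally $G$-continuous Banach $G$-contraction $T$, if $X_T=\emptyset$ then $\fix(T)\subseteq X_T=\emptyset$ forces $T$ to be fixed-point free, while if $X_T\neq\emptyset$ then (ii) makes $T$ a Picard operator and so $T$ has exactly one fixed point; in either case $T$ has at most one fixed point in $X$.

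The only step requiring any thought is (iii $\Rightarrow$ i), and even there the work is essentially already done. Arguing by contraposition, suppose $\widetilde G$ is disconnected, i.e.\ $G$ is not weakly connected. Then, exactly as in the proof of (iii $\Rightarrow$ i) in Theorem~\ref{main1}, I would fix $x_0\in X$ with $[x_0]_{\widetilde G}\neq X$, choose $y_0\in X\setminus[x_0]_{\widetilde G}$, and define the two-valued map $T$ sending $[x_0]_{\widetilde G}$ to $x_0$ and its complement to $y_0$; that argument already shows $T$ is a Banach $G$-contraction with $\fix(T)=\{x_0,y_0\}$, hence with two distinct fixed points. What remains is to certify that $T$ qualifies for statement (iii), i.e.\ that $T$ is orbitally $G$-continuous. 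But in the proof of Corollary~\ref{cormain3} this very mapping was already verified to be orbitally continuous on $X$, and since orbital continuity implies orbital $G$-continuity for every graph $G$, $T$ is orbitally $G$-continuous. Thus $T$ is an orbitally $G$-continuous Banach $G$-contraction with two fixed points, contradicting (iii); this proves (iii $\Rightarrow$ i). Reading this contrapositive directly yields the closing assertion: whenever $\widetilde G$ is disconnected, the mapping $T$ just described is the desired orbitally $G$-continuous Banach $G$-contraction with at least two fixed points.

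Because each implication reduces either to an already-proved theorem or to a counterexample whose relevant properties are on record, I do not anticipate any genuine obstacle. The step that would ordinarily be the crux, namely (iii $\Rightarrow$ i), is pre-empted by the earlier verification of orbital continuity of the counterexample together with the elementary implication ``orbital continuity $\Rightarrow$ orbital $G$-continuity''; the only real care needed is the bookkeeping in (ii $\Rightarrow$ iii), where the empty-$X_T$ case must be treated separately so that the total absence of a fixed point still counts as ``at most one.''
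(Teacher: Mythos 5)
Your proposal is correct and matches the paper's own proof essentially verbatim: (i $\Rightarrow$ ii) via Assertion 2 of Theorem \ref{main4}, (ii $\Rightarrow$ iii) via the dichotomy on whether $X_T$ is empty (using $\fix(T)\subseteq X_T$), and (iii $\Rightarrow$ i) by reusing the two-valued counterexample from Theorem \ref{main1}, whose orbital continuity was checked in Corollary \ref{cormain3} and which is therefore orbitally $G$-continuous. No gaps.
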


\begin{proof}
(i $\Rightarrow$ ii) It follows immediately from Assertion 2 of
Theorem \ref{main4}.\par (ii $\Rightarrow$ iii) Let
$T:X\rightarrow X$ be an orbitally $G$-continuous Banach
$G$-contraction. If $X_T=\emptyset$, then $T$ is fixed point free
since $\fix(T)\subseteq X_T$. Otherwise, if $X_T\neq\emptyset$,
then it follows by the hypothesis that $T$ is a Picard operator
and so it has a unique fixed point. Therefore, $T$ has at most one
fixed point in $X$.\par (iii $\Rightarrow$ i) According to the
proof of (iii $\Rightarrow$ i) in Corollary \ref{cormain3}, it is
obvious because each orbitally continuous mapping on $X$ is
orbitally $G$-continuous.
\end{proof}

Recall that a mapping $T$ from a uniform space $X$ (not
necessarily endowed with a graph) into itself is nonexpansive if
$(x,y)\in V$ implies $(Tx,Ty)\in V$ for all $x,y\in X$ and all
$V\in\mathscr V$. Clearly, each nonexpansive mapping
$T:X\rightarrow X$ is (uniformly) continuous on $X$. As the final
result of this paper, we give a sufficient condition for a
nonexpansive map whose some restriction is a Picard operator. We
first have the following lemma:

\begin{lem}\label{222} Let $A$ be a dense subset of $X$ and let
$T:X\rightarrow X$ be a mapping such that the family
$\{T^n:n\geq1\}$ is equicontinuous on $X$. If there exists an
$x^*\in X$ such that $T^nx\rightarrow x^*$ for all $x\in A$, then
$T^nx\rightarrow x^*$ for all $x\in X$.
\end{lem}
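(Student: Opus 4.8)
The plan is to run the standard ``dense set plus equicontinuity'' argument, but carried out entirely with basic entourages and their scalings from Lemma \ref{lem1}. Fix an arbitrary point $x\in X$ and an arbitrary basic entourage $V\in\mathscr V$. Since the basic entourages form a base for $\mathscr U$, it suffices to produce an integer $N$ with $(T^nx,x^*)\in V$ for all $n\geq N$; as $V$ is then arbitrary, this yields $T^nx\rightarrow x^*$.

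First I would split $V$. By Lemma \ref{lem1}(ii), the choice $V_0=\frac12V$ gives a basic entourage with $V_0\circ V_0\subseteq\big(\frac12+\frac12\big)V=V$. Next I would invoke the equicontinuity of the family $\{T^n:n\geq1\}$ at the level of this scaling: there is an entourage $W$ of $X$, which we may take to be a basic (hence symmetric) entourage, such that $(y,z)\in W$ implies $(T^ny,T^nz)\in V_0$ for \emph{every} $n\geq1$ simultaneously. This single $W$ is the crucial object, since it must control all iterates at once.

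Then I would use the density of $A$. Because $W$ is an entourage, $W[x]$ is a neighborhood of $x$ in the uniform topology (the sets $U[x]$ with $U\in\mathscr U$ form a neighborhood base at $x$), so the density of $A$ furnishes a point $a\in A$ with $a\in W[x]$, i.e.\ $(x,a)\in W$. Equicontinuity then gives $(T^nx,T^na)\in V_0$ for all $n\geq1$. On the other hand $a\in A$, so $T^na\rightarrow x^*$, and hence there is an integer $N$ with $(T^na,x^*)\in V_0$ for all $n\geq N$. Composing these two memberships, for every $n\geq N$ we obtain $(T^nx,x^*)\in V_0\circ V_0\subseteq V$, which completes the argument.

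The main obstacle is essentially bookkeeping: the only genuinely delicate point is the interface between the equicontinuity hypothesis, from which one must extract the \emph{single} entourage $W$ valid for all $T^n$, and the pointwise convergence on $A$, both of which have to be routed through the common scaling $V_0=\frac12V$. It is worth noting that this lemma uses neither nonexpansiveness nor the $G$-contraction property of $T$; it relies solely on equicontinuity of the family of iterates together with convergence on a dense set, which is exactly why it is isolated as a separate lemma before the final nonexpansive-map result.
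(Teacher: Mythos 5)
Your proposal is correct and follows essentially the same route as the paper's proof: split the target entourage into a half that composes back into itself, use equicontinuity of $\{T^n\}$ to get a single entourage $W$ controlling all iterates, pick a point of $A$ in $W[x]$ by density, and compose the two memberships. The only cosmetic difference is that you split $V$ via the scaling $\tfrac12V$ from Lemma \ref{lem1}(ii) while the paper invokes axiom (U4) for a general entourage $U$; both are equally valid.
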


\begin{proof}
Let $x\in X$ and let an entourage $U$ of $X$ be given. Choose a
$V\in\mathscr U$ such that $V\circ V\subseteq U$. Because the
family $\{T^n:n\geq1\}$ is equicontinuous on $X$, there exist a
positive integer $N_1$ and an entourage $W$ of $X$ such that
$(T^nx,T^ny)\in V$ for all $n\geq N_1$ and all $y\in X$ with
$(x,y)\in W$. Since $A$ is dense in $X$, the set $W[x]\cap A$ is
nonempty, i.e., there exists a $y\in A$ such that $(x,y)\in W$. By
the hypothesis, we have $T^ny\rightarrow x^*$. So there exists an
$N_2>0$ such that $(T^ny,x^*)\in V$ for all $n\geq N_2$. Now, if
$n\geq \max\{N_1,N_2\}$, then we have
$$(T^nx,T^ny),(T^ny,x^*)\in V,$$
and so
$$(T^nx,x^*)\in V\circ V\subseteq U.$$
\end{proof}

\begin{thm} Let $X$ be sequentially complete and
let $T:X\rightarrow X$ be a Banach $G$-contraction. If $T$ is
nonexpansive or the family $\{T^n:n\geq1\}$ is equicontinuous on
$X$, then $T\mid_{\overline{[x]_{\widetilde G}}}$ is a Picard
operator for all $x\in X$ with $Tx\in[x]_{\widetilde G}$. In
particular, if $G$ is weakly connected, then each nonexpansive
Banach $G$-contraction is a Picard operator.
\end{thm}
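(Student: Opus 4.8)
The plan is to reduce both hypotheses to a single case and then lift a convergence statement from the component $[x]_{\widetilde G}$ to its closure by means of Lemma \ref{222}. First I would observe that if $T$ is nonexpansive, then an immediate induction shows that $(x,y)\in V$ implies $(T^nx,T^ny)\in V$ for every basic entourage $V$ and every $n\geq1$; hence the family $\{T^n:n\geq1\}$ is (uniformly) equicontinuous on $X$. Thus in either case we may assume that $\{T^n:n\geq1\}$ is equicontinuous, and in particular that $T=T^1$ is continuous on $X$.

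Next I would fix $x\in X$ with $Tx\in[x]_{\widetilde G}$. By Proposition \ref{proposition}, the set $[x]_{\widetilde G}$ is $T$-invariant and the sequences $\{T^ny\}$ and $\{T^nz\}$ are Cauchy equivalent for all $y,z\in[x]_{\widetilde G}$. In particular $\{T^nx\}$ is Cauchy, so by the sequential completeness of $X$ there is an $x^*\in X$ with $T^nx\rightarrow x^*$; repeating the short argument used in the proof of (i $\Rightarrow$ ii) of Corollary \ref{corollary} (splitting a basic entourage through $V_0\circ V_0$) yields $T^ny\rightarrow x^*$ for every $y\in[x]_{\widetilde G}$. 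Since $T$ is continuous, $T^{n+1}x\rightarrow Tx^*$, and because $X$ is separated the uniqueness of limits forces $Tx^*=x^*$, so $x^*\in\fix(T)$. Moreover $x^*$ is a limit of the points $T^nx\in[x]_{\widetilde G}$, whence $x^*\in\overline{[x]_{\widetilde G}}$.

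I would then write $Y=\overline{[x]_{\widetilde G}}$ and equip it with the uniformity induced by $\mathscr U$. Since $T$ is continuous and $[x]_{\widetilde G}$ is $T$-invariant, one has $T(Y)\subseteq\overline{T([x]_{\widetilde G})}\subseteq Y$, so $T$ restricts to a self-map of $Y$; the family $\{T^n|_Y:n\geq1\}$ inherits equicontinuity, and $[x]_{\widetilde G}$ is dense in $Y$ by definition. As $T^ny\rightarrow x^*$ for every $y$ in this dense subset and $x^*\in Y$, Lemma \ref{222} applied to the uniform space $Y$ gives $T^ny\rightarrow x^*$ for all $y\in Y$. Finally, any fixed point $y^*\in Y$ satisfies $y^*=T^ny^*\rightarrow x^*$, so $y^*=x^*$ by separatedness; thus $x^*$ is the unique fixed point of $T$ in $Y$ and $T\mid_Y$ is a Picard operator.

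The ``in particular'' statement then follows at once: weak connectivity of $G$ gives $[x]_{\widetilde G}=X$ for every $x$, so $Y=X$ and the condition $Tx\in[x]_{\widetilde G}$ holds trivially; combined with the reduction of nonexpansiveness to equicontinuity, this shows that each nonexpansive Banach $G$-contraction is a Picard operator. I expect the one delicate point to be the passage to the closure in the third paragraph: one must verify that $Y$ is genuinely $T$-invariant and that the prospective limit $x^*$ already lies in $Y$, since Lemma \ref{222} is only applicable once both $T$ and its limit live inside the ambient space containing the dense subset.
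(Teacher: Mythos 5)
Your proposal is correct and follows essentially the same route as the paper: reduce nonexpansiveness to equicontinuity, obtain the fixed point $x^*$ and the convergence $T^ny\rightarrow x^*$ on $[x]_{\widetilde G}$, check that $\overline{[x]_{\widetilde G}}$ is $T$-invariant, and lift the convergence to the closure via Lemma \ref{222}. The only difference is cosmetic: the paper cites Assertion 1 of Theorem \ref{main3} for the middle step, whereas you re-derive it inline (and you are slightly more explicit that Lemma \ref{222} is being applied to the subspace $\overline{[x]_{\widetilde G}}$ with its induced uniformity, which is a point the paper leaves implicit).
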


\begin{proof} Because the nonexpansivity of $T$ implies the
equicontinuity of the family $\{T^n:n\geq1\}$ on $X$, we need only
to consider the case where the second condition holds. So let
$x\in X$ be such that $Tx\in[x]_{\widetilde G}$. Then it follows
by Proposition \ref{proposition} that $[x]_{\widetilde G}$ is
$T$-invariant. Now, given any $y\in\overline{[x]_{\widetilde G}}$,
there exists a net $\{x_\lambda:\lambda\in\Lambda\}$ such that
$x_\lambda\rightarrow y$. By continuity of $T$ on $X$, we get
$Tx_\lambda\rightarrow Ty$ and so $Ty\in\overline{[x]_{\widetilde
G}}$ since $\{Tx_\lambda:\lambda\in\Lambda\}$ is still a net in
$[x]_{\widetilde G}$. Therefore, $\overline{[x]_{\widetilde G}}$
is $T$-invariant. Moreover, by Assertion 1 of Theorem \ref{main3},
there exists a unique $x^*\in \fix(T)$ such that $T^ny\rightarrow
x^*$ for all $y\in[x]_{\widetilde G}$. Clearly,
$x^*\in\overline{[x]_{\widetilde G}}$ is the unique fixed point of
$T$ in $\overline{[x]_{\widetilde G}}$ and since the family
$\{T^n:n\geq1\}$ is equicontinuous on $X$, the result follows
immediately from Lemma \ref{222}.
\end{proof}


\begin{thebibliography}{10}
\bibitem{ach}S. P. Acharya, {\sl Some results on fixed points in
uniform spaces}, Yokohama Math. J. {\bf 22} (1974) 105-116.
\bibitem{agh}A. Aghanians, K. Fallahi, K. Nourouzi, {\sl Fixed points for $G$-contractions on uniform spaces endowed with a graph}, Fixed Point Theory Appl. {\bf 2012}:182, 2012, 15 pages.
\bibitem{bon}J. A. Bondy, U. S. R. Murty, {\sl Graph theory with applications}, American Elsevier Publishing Co., Inc., New York 1976.
\bibitem{jac}J. Jachymski, {\sl The contraction principle for
mappings on a metric space with a graph}, Proc. Amer. Math. Soc.
{\bf 136} (2008) 1359-1373.
\bibitem{pet}A. Petru\c sel, I. A. Rus, {\sl Fixed point theorems
in ordered $L$-spaces}, Proc. Amer. Math. Soc. {\bf 134} (2006) 411-418.
\bibitem{wil}S. Willard, {\sl General topology}, Addison-Wesley
Publishing Co., Reading, Mass.-London-Don Mills, Ont., 1970.
\end{thebibliography}
\end{document}